\definecolor{red}{rgb}{1,0,0}
\definecolor{gre}{rgb}{0,0.7,0}
\definecolor{blu}{rgb}{0,0,1}
\newtheorem{theorem}{Theorem}[section]
\newtheorem{corollary}[theorem]{Corollary}
\newtheorem{lemma}[theorem]{Lemma}
\newtheorem{proposition}[theorem]{Proposition}
\theoremstyle{definition}
\theoremstyle{remark}
\newcommand{\e}[0]{\ensuremath{\mathrm{e}}}
\newcommand{\QQ}[0]{\ensuremath{\mathbb{Q}}}
\newcommand{\RR}[0]{\ensuremath{\mathbb{R}}}
\newcommand{\ZZ}[0]{\ensuremath{\mathbb{Z}}}
\newcommand{\ee}[0]{\ensuremath{\mathrm{e}}}
\newcommand{\ii}[0]{\ensuremath{\mathrm{i}}}
\newcommand{\SL}[0]{\ensuremath{\mathrm{SL}}}
\newcommand{\HH}[0]{\ensuremath{\mathbb{H}}}
\newcommand{\TT}[0]{\ensuremath{\mathbb{T}}}
\newcommand{\CC}[0]{\ensuremath{\mathbb{C}}}
\newcommand{\mcF}[0]{\ensuremath{\mathcal{F}}}
\newcommand{\ASL}[0]{\ensuremath{\mathrm{ASL}}}
\newcommand{\mcD}[0]{\ensuremath{\mathcal{D}}}
\newcommand{\GL}[0]{\ensuremath{\mathrm{GL}}}
\newcommand{\mcS}[0]{\ensuremath{\mathcal{S}}}
\begin{document}

\title[Intersections in $\mathrm{SL}(3, \mathbb{Z}) \backslash \mathrm{SL}(3, \mathbb{R})$ and roots of cubic congruences]{The distribution of intersections in $\mathrm{SL}(3, \mathbb{Z}) \backslash \mathrm{SL}(3, \mathbb{R})$ and lattices related to roots of cubic congruences}
\date{7 January 2026}
\author{Matthew Welsh}

\begin{abstract}
  In this note we study the distribution of the intersections between certain translates of closed orbits of the positive diagonal subgroup in $\SL(3, \mathbb{Z}) \backslash \SL(3, \mathbb{R})$ with a maximal parabolic subgroup.
  These intersections are closely connected to roots of congruences for certain monic, irreducible cubic polynomials $F(X) \in \mathbb{Z}[X]$.
  The main result is that the intersections, considered as a sequences in the diagonal subgroup and the parabolic subgroup, are jointly equidistributed.
  This implies that certain affine lattices determined by pairs of roots of the cubic congruences are jointly equidistributed with corresponding ideals in the associated ring of integers.
  We note that the techniques here roughly parallel those which have been developed to study the multidimensional Farey sequence, and one hopes that techniques to study roots of congruences will continue to develop.
\end{abstract}

\maketitle


\section{Introduction}
\label{sec:intro}

Let $G = \SL(3, \RR)$ and $\Gamma = \SL(3, \ZZ)$.
The positive diagonal subgroup $A \subset G$, which we parametrize by
\begin{equation}
  \label{eq:aa1}
  a(t) = 
  \begin{pmatrix}
    \ee^{-2t} & 0 & 0 \\
    0 & \ee^t & 0 \\
    0 & 0 & \ee^{t}
  \end{pmatrix}
  ,\ a_1(t_1) = 
  \begin{pmatrix}
    1 & 0 & 0 \\
    0 & \ee^{-t_1} & 0 \\
    0 & 0 & \ee^{t_1}
  \end{pmatrix}
\end{equation}
has closed orbits in $\Gamma \backslash G$ through special points $\Gamma g_l$ associated to ideal classes $[I_l]$ in totally real cubic orders, see eg Linnik's classic book \cite[Chapter~VII]{MR238801}, or, for a more modern treatment, works by Einsiedler, Lindenstrauss, Michel, and Venkatesh \cite{MR2515103, MR2776363}.
In this short note, our interest in these closed orbits of $A$ comes from their connection to roots of cubic congruences, solutions $\mu \bmod m$ to the congruence $F(\mu) \equiv 0 \bmod m$ with $F$ an integer, cubic polynomial.
Theorem \ref{theorem:rootsintersections} below connects these congruence-roots (under certain assumptions on $F$) with the points in the intersection
\begin{equation}
  \label{eq:intersection}
  \Gamma g_l A g_0^{-1} \cap \Gamma P,
\end{equation}
where $g_0$ is a basis matrix of the cubic order associated to the polynomial $F$ and
\begin{equation}
  \label{eq:Pdef}
  P = \{
  \begin{pmatrix}
    * & * & * \\
    0 & * & * \\
    0 & * & * 
  \end{pmatrix}
  \in G \}.
\end{equation}
We roughly order the points in (\ref{eq:intersection}) by considering
\begin{equation}
  \label{eq:intersection1}
  \Gamma g_l A g_0^{-1} \cap \bigcup_{t\leq T}\Gamma P_0 a(t)
\end{equation}
where
\begin{equation}
  \label{eq:P0def}
  P_0 = \{
  \begin{pmatrix}
    1 & * & * \\
    0 & * & * \\
    0 & * & * 
  \end{pmatrix}
  \in G \}.
\end{equation}
Our main theorem, theorem \ref{theorem:equidistribution} below, is that the coordinates of the points in (\ref{eq:intersection1}), considered as a sequence in $A \times P_0$ are jointly equidistributed as $T \to \infty$ with respect to the measures on $A$ and $P_0$ induced by Haar measure on $G$.

In terms of the congruence-roots, this implies that certain affine lattices related to pairs of roots become equidistributed in $\ASL(2, \ZZ) \backslash \ASL(2, \RR)$.
The proof of theorem \ref{theorem:equidistribution} provides a dynamical and geometric method for studying the statistical distribution of objects naturally related to roots of cubic congruences.
This marks progress in a long-standing goal to use techniques on $\Gamma \backslash G$ to studying roots of cubic congruences, see \cite{MR3496932}. 

The sequence of normalized roots of cubic congruences is known by work of Hooley  \cite{Hooley1964} to be equidistributed modulo one, however the methods used to show this have since been understood by work of Kowalski and Soundararajan \cite{KowalskiSoundararajan2020} to have little to do with the arithmetic of the polynomials themselves, but rather on properties of sequences that can be constructed via the Chinese remainder theorem from sets of congruence classes modulo primes.
In particular such methods cannot produce strong enough input for use in sieve methods such as in work on roots of quadratic congruences by Duke, Friedlander and Iwaniec \cite{DukeFriedlanderIwaniec1995}.

In other work, algebraic parametrizations of the roots of polynomial congruences have been used effectively to study somewhat related problems \cite{Heath-Brown2000, Dartyge2015, delaBreteche2015} that do not require strong results on the distribution of the congruence roots.
It therefore remains an attractive proposition to develop a geometric/dynamic understanding of the congruence roots in hopes that techniques from homogeneous dynamics or even the spectral theory of automorphic forms may be brought into the analysis of their distribution.

With this context, theorem \ref{theorem:equidistribution} draws interesting parallels to recent work on the two- and higher-dimensional Farey sequence, where both dynamic and analytic techniques have been successfully used to study their distribution.
We review some aspects of this work in the following section.

\subsection{Background on the Farey sequence}
\label{sec:background}

\subsubsection{$\SL(2)$ setting}
\label{sec:SL2}

Our methods and results are analogous to the study of the multidimensional Farey sequence, or equivalently, the rational points on closed horospheres.
The analogy is most clear in the $\SL(2)$ setting due to the identification of $\SL(2, \RR) / \{\pm I\}$ with the unit tangent bundle of the upper half plane $\HH$.
In Iwasawa coordinates, we have
\begin{equation}
  \label{eq:T1H}
  \pm
  \begin{pmatrix}
    1 & x \\
    0 & 1
  \end{pmatrix}
  \begin{pmatrix}
    y^{\frac{1}{2}} & 0 \\
    0 & y^{-\frac{1}{2}}
  \end{pmatrix}
  \begin{pmatrix}
    \cos \theta & -\sin \theta \\
    \sin \theta & \cos \theta
  \end{pmatrix}
  \mapsto ( x + \ii y, \ii y \ee^{-2\ii \theta}),
\end{equation}
where on the right we have written the unit tangent bundle of $\HH$ as the set of all pairs $(z, w) \in \CC^2$ with $z \in \HH$ and $|w| = \mathrm{Im}(z)$, so that $w$ has unit length in the hyperbolic metric.

Under this identification, the set of matrices of the form $
\begin{pmatrix}
  1 & * \\
  0 & 1
\end{pmatrix}
$
corresponds to a horizontal horocycle with tangent vectors pointing vertically upwards.
The image of this set under $
\begin{pmatrix}
  a & b \\
  c & d
\end{pmatrix}
\in \Gamma
$ is another horocycle, which will not be horizontal if $c \neq 0$.
In this case, the top of the horocycle is the point with tangent vector pointing vertically downwards.
The calculation
\begin{equation}
  \label{eq:horocycletop}
  \begin{pmatrix}
    a & b \\
    c & d
  \end{pmatrix}
  \begin{pmatrix}
    1 & -\frac{d}{c} \\
    0 & 1
  \end{pmatrix}
  =
  \begin{pmatrix}
    1 & \frac{a}{c} \\
    0 & 1
  \end{pmatrix}
  \begin{pmatrix}
    \frac{1}{c} & 0 \\
    0 & c
  \end{pmatrix}
  \begin{pmatrix}
    0 & -1 \\
    1 & 0
  \end{pmatrix}
\end{equation}
then shows that this top is $\tfrac{a}{c} + \tfrac{i}{c^2}$.

The sequence of Farey fractions $\tfrac{a}{c}$ may therefore be studied as the tops of the images of a horocycle under $\Gamma$ or as the points of intersection between the sets $\Gamma
\begin{pmatrix}
  1 & * \\
  0 & 1
\end{pmatrix}
$
and $\Gamma
\begin{pmatrix}
  * & * \\
  0 & *
\end{pmatrix}
\begin{pmatrix}
  0 & -1 \\
  1 & 0
\end{pmatrix}
$.
This latter perspective was successfully used by Athreya and Cheung \cite{MR3214280} to study gap distribution of Farey fractions and the BCZ map \cite{MR1837099}, and the method can be extended to lattices other than $\SL(2, \ZZ)$, see \cite{MR3330337, MR3567252} for example. 

If instead one starts with the geodesic in $\HH$ from $-\sqrt{2}$ to $\sqrt{2}$, then the tops of the images of this geodesic are points $\frac{\mu}{m} + \ii \frac{\sqrt{2}}{m}$ with $\mu^2 \equiv 2 \bmod m$.
This connection between the tops of the $\Gamma$-images of a geodesic and roots of the congruence is used to study the distribution of these roots in \cite{MR4654052}, \cite{MR4795423} and can be interpreted in terms of the intersections between
\begin{equation}
  \label{eq:geodesicintersection}
  \Gamma \frac{1}{(2\sqrt 2)^{\frac{1}{2}}}
  \begin{pmatrix}
    \sqrt{2} & -\sqrt{2} \\
    1 & 1
  \end{pmatrix}
  \begin{pmatrix}
    * & 0 \\
    0 & *
  \end{pmatrix}
  \mathrm{\ and\ }
  \Gamma
  \begin{pmatrix}
    * & * \\
    0 & *
  \end{pmatrix}
  \begin{pmatrix}
    \cos \frac{\pi}{4} & \pm \sin \frac{\pi}{4} \\
    \mp \sin \frac{\pi}{4} & \cos \frac{\pi}{4}
  \end{pmatrix}
  ,
\end{equation}
the latter corresponding to points with horizontal tangent vectors.

With some modifications to account for nontrivial narrow ideal classes of $\ZZ[\sqrt D]$, most significantly the use of multiple orbits of geodesics, this extends to the roots of the quadratic congruence $\mu^2 \equiv D \bmod m$ with $D > 1$ squarefree, see \cite{MR4654052,MR4795423}.
These roots are therefore naturally analogous to the Farey fractions with tops of horocycles being replaced with tops of geodesics, and they can be studied by similar techniques.
This analogy extends, in a somewhat simpler manner, to quadratic congruences with negative $D$ as well, but with geodesics/horocycles being replaced with $\mathrm{SO}(2)$ cosets of $\SL(2, \RR)$, ie points in $\HH$. 

\subsubsection{$SL(3)$ setting}
\label{sec:SL3}

The analogy requires more significant modification to extend in $\SL(3, \RR)$ to cubic congruences and the two-dimensional Farey sequence.
Instead of roots of cubic congruences themselves, our primary result concerns certain lattices associated to the roots, see theorem \ref{theorem:rootsintersections}.

The two-dimensional Farey fractions are pairs of rational numbers $(\frac{r_1}{q}, \frac{r_2}{q})$ with $q > 0$ and $\gcd(q, r_1, r_2) = 1$, ordered by the denominator $q$.
Considered modulo $\ZZ^2$, these points, like the classical 1-dimensional Farey sequence, are easily shown to equidistribute on the torus $\TT^2 = \RR^2 / \ZZ^2$.
Marklof \cite{MR3079137} studies the fine-scale distribution of these points and their higher-dimensional counterparts using dynamics on $\SL(3, \ZZ) \backslash \SL(3, \RR)$ and higher-rank homogeneous spaces.

A key observation in the method is that the points in the intersections
\begin{equation}
  \label{eq:closedhorosphere}
  \Gamma N \cap \bigcup_{t\leq T}\Gamma P_0 a(t),
\end{equation}
where
\begin{equation}
  \label{eq:Nbardef}
  N = \{ n(x_1,x_2) = 
  \begin{pmatrix}
    1 & 0 & 0 \\
    x_1 & 1 & 0 \\
    x_2 & 0 & 1 
  \end{pmatrix}
  : x_1,x_2 \in \RR\},
\end{equation}
occur at $n (\bm x) \in N$ with $\bm x$ a Farey fraction with denominator $q \leq \ee^{2T}$.
Moreover, the $\Gamma \backslash \Gamma P_0 \cong \ASL(2, \ZZ) \backslash \ASL(2, \RR)$ coordinates corresponding to the Farey fraction through this intersection provide information on the fine-scale distribution of the Farey points, see \cite{MR3079137}.
Limit theorems  on the fine-scale distribution follow from the joint equidistribution of the Farey points and these points in $\ASL(2, \ZZ) \backslash \ASL(2, \RR)$ as $T \to \infty$. 

Several subsequent works have studied these points using techniques from dynamics \cite{MR3484111} as well as Fourier-analytic methods \cite{MR3873538, elbaz2023effectiveequidistributionprimitiverational}.
In addition to studying the problem in any dimension, these works are able to prove this joint equidistribution for the points with a single denominator $q\to \infty$ as opposed to all denominators $q \leq \ee^{2T}$.

For the problem with an individual $q$, the relevant intersection (\ref{eq:closedhorosphere}), with a fixed $t$, is between two-dimensional and five-dimensional homogeneous subspaces of an eight-dimensional space.
These intersections are therefore considered ``unlikely'' and require additional arithmetic input to be understood fully.
Indeed, a key observation in \cite{ MR3873538, MR3484111, elbaz2023effectiveequidistributionprimitiverational} is that the $\SL(2, \ZZ) \backslash \SL(2, \RR)$ part of the $\ASL(2, \ZZ) \backslash \ASL(2, \RR)$ coordinates in the intersection (\ref{eq:closedhorosphere}) are closely related to the set of Hecke neighbors of $\ZZ^2$ with index $q$.
One can then appeal to results on the representation theory of Hecke operators such as \cite{MR2058609} to then prove their equidistribution.

It would be attractive if one could study the intersection (\ref{eq:intersection}) replaced by
\begin{equation}
  \label{eq:intersection2}
  \Gamma g_l A g_0^{-1} \cap \Gamma B,
\end{equation}
where $B$ is the subgroup of upper triangular matrices and we recall $\Gamma g_l$ correspond to ideal classes in a cubic order, since these would correspond to roots of cubic congruences directly, see theorem \ref{theorem:rootsintersections} and the remarks following below.
The intersections would be unlikely in this case as in \cite{MR3484111}, an intersection between a 2- and a 5-dimensional subspace inside of an 8-dimensional space, but so far no satisfactory replacement for \cite{MR2058609} has been found.
We do remark however that the $A$-coordinates of points in (\ref{eq:intersection2}) can be shown to equidistribute using analytic properties of L-functions associated to Hecke characters.

\subsection{Results}
\label{sec:results}

We recall
\begin{equation}
  \label{eq:aa1Adef}
  a(t) =
  \begin{pmatrix}
    \e^{-2t} & 0 & 0 \\
    0 & \e^t & 0 \\
    0 & 0 & \e^t
  \end{pmatrix}
  ,\ a_1(t_1) =
  \begin{pmatrix}
    1 & 0 & 0 \\
    0 & \e^{-t_1} & 0 \\
    0 & 0 & \e^{t_1}
  \end{pmatrix}
  ,\ A = \{ a(t)a_1(t_1) : t, t_1 \in \RR\},
\end{equation}
and
\begin{equation}
  \label{eq:Pdef1}
  P = \{
  \begin{pmatrix}
    * & * & * \\
    0 & * & * \\
    0 & * & * 
  \end{pmatrix}
  \in G \},\ P_0 = \{
  \begin{pmatrix}
    1 & * & * \\
    0 & * & * \\
    0 & * & * 
  \end{pmatrix}
  \in G\}. 
\end{equation}
We note that $P_0$ is isomorphic to $\ASL(2, \RR) = \SL(2, \RR) \ltimes \RR^2$ with multiplication
\begin{equation}
  \label{eq:aslmult}
  (g_1, v_1)(g_2, v_2) = (g_1g_2, v_1g_2 + v_2).
\end{equation}
We also note that $\Gamma \backslash \Gamma P_0$ is isomorphic to $\ASL(2, \ZZ) \backslash \ASL(2, \RR)$. 

We fix an irreducible polynomial $F(X) = X^3 + a_1 X^2 + a_2 X + a_3 \in \ZZ[X]$ and we let $\alpha =
\begin{pmatrix}
  \alpha^{(1)} & \alpha^{(2)} & \alpha^{(3)}
\end{pmatrix}
$
be a root of $F$, which we identify as an element of $\CC^3$ via the embeddings of $\QQ(\alpha)$ into $\CC$.
We note that mulciplication in $\QQ(\alpha)$ corresponds to componet-wise multiplication in $\CC$.

We assume that that $\alpha$ generates a totally real field, i.e. $\alpha \in \RR^3$, and we also assume that the ring $\ZZ[\alpha]$ is a maximal order, i.e. $\ZZ[\alpha]$ is the ring of integers in $\QQ(\alpha)$. 
We let $g_0 \in G$ be the basis matrix
\begin{equation}
  \label{eq:standardbasis}
  \begin{pmatrix}
    \alpha^2 \\
    \alpha \\
    1
  \end{pmatrix}
  =
  \begin{pmatrix}
    (\alpha^{(1)})^2 & (\alpha^{(2)})^2 & (\alpha^{(3)})^2 \\
    \alpha^{(1)} & \alpha^{(2)} & \alpha^{(3)} \\
    1 & 1 & 1
  \end{pmatrix}
\end{equation}
of the standard $\ZZ$-basis of $\ZZ[\alpha]$ rescaled and with embeddings ordered so that $\det g_0 = 1$.
In the same way, we let $g_1, \dots, g_h \in G$ be rescaled basis matrices of narrow ideal class representatives $I_1, \dots, I_h$.

To a totally positive $\xi \in \QQ(\alpha)$, we associate
\begin{equation}
  \label{eq:tildea}
  \tilde a(\xi) =
  \tfrac{1}{N(\xi)^{\frac{1}{3}}}
  \begin{pmatrix}
    \xi^{(1)} & 0 & 0 \\
    0 & \xi^{(2)} & 0 \\
    0 & 0 & \xi^{(3)}
  \end{pmatrix}
  ,
\end{equation}
where $N(\xi)$ is the norm of $\xi$.
In this way, the totally positive units $\mathcal{U}^+$ in $\ZZ[\alpha]$ can be identified with $ g_0^{-1} \Gamma g_0 \cap A =  g_l^{-1}\Gamma g_l \cap A$; note that since $\ZZ[\alpha]$ is the ring of integers, $u I = u$ implies that $u$ is a unit.
We have the following extension of theorem 6 of \cite{MR4467125}.

\begin{theorem}
  \label{theorem:rootsintersections}
  The points in the intersection of $\bigcup_{l=1}^h \Gamma g_l A g_0^{-1}$ with $\bigcup_{t\leq T}\Gamma P_0 a(t)$ correspond exactly to
  \begin{equation}
    \label{eq:intersectioncoordinates}
    \tilde{a}(\mathcal{U}^+ \xi) \in (g_l^{-1} \Gamma g_l \cap A) \backslash A
  \end{equation}
  and
  \begin{equation}
    \label{eq:intersectioncoordinates1}
    \Gamma 
    \begin{pmatrix}
      1 & \tfrac{\mu_1 + a_1}{m_1} & \tfrac{\lambda}{m_1m_2} \\
      0 & 1 & - \tfrac{\mu_2}{m_2} \\
      0 & 0 & 1
    \end{pmatrix}
    \begin{pmatrix}
      1 & 0 & 0 \\
      0 & m_2^{-\frac{1}{2}} & 0 \\
      0 & 0 & m_2^{\frac{1}{2}}
    \end{pmatrix}
    \in \Gamma \backslash \Gamma P_0
  \end{equation}
  with $N(\xi) = m_1^2 m_2 \leq \e^{6T}$, where $\xi \in I_l^{-1}$ is totally positive, $F(\mu_j) \equiv 0 \bmod {m_j}$, $\gcd(m_1, m_2, \mu_1 - \mu_2) = 1$, and where $\lambda$ is uniquely determined modulo $m_1m_2$ by $\mu_1$ and $\mu_2$, see (\ref{eq:idealcondition2}).
\end{theorem}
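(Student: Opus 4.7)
The plan is to extend theorem 6 of \cite{MR4467125}, which handles the intersection with $\Gamma P$ (providing the $A$-coordinate and the first root $(\mu_1, m_1)$), by carrying out a further $\SL(2, \ZZ)$-reduction of the lower-right $2 \times 2$ block to extract the $P_0$-coordinate. The starting point is the standard characterization $h \in \Gamma P$ iff $h e_1 \in \RR_{>0} v$ for some primitive $v \in \ZZ^3$, in which case the scalar equals $\e^{-2t}$, pinning down the $a(t)$-coordinate in the intersection with $\bigcup_{t \leq T} \Gamma P_0 a(t)$. Applied to $h = g_l a g_0^{-1}$, this reduces the geometric problem to identifying those $a \in A$ for which $g_l a g_0^{-1} e_1$ is proportional to a primitive integer vector.

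I would first compute $g_0^{-1} e_1$ via Lagrange interpolation as a constant multiple of $\bigl(1,\ a_1 + \alpha^{(1)},\ (\alpha^{(1)})^2 + a_1 \alpha^{(1)} + a_2\bigr)^T / F'(\alpha^{(1)})$, and then interpret the image $g_l a g_0^{-1} e_1$ via the Minkowski identification $\RR^3 \cong \QQ(\alpha) \otimes_{\QQ} \RR$. The integrality constraint then forces $a$ to lie in the $\tilde a(\mathcal{U}^+)$-coset of $\tilde a(\xi)$ for some totally positive $\xi \in I_l^{-1}$, and the components of the resulting primitive $v$ simultaneously encode a root $\mu_1$ of $F \pmod{m_1}$, with $m_1$ being the common denominator appearing in the computation. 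This step is essentially the content of the earlier theorem.

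For the second, new, step I would pick $\gamma \in \Gamma$ with $\gamma^{-1} e_1 = v$, so that $p := \gamma g_l \tilde a(\xi) g_0^{-1} \in P$, and decompose $p = p_0 a(t)$ with $p_0 \in P_0$. The first row entries of $p_0$ become $(\mu_1 + a_1)/m_1$ and $\lambda/(m_1 m_2)$ by direct matrix computation. The lower-right $\SL(2, \RR)$ block of $p_0$ still carries a residual $\SL(2, \ZZ)$-freedom coming from the ambiguity of $\gamma$ modulo the stabilizer of $e_1$ in $\Gamma$, which is isomorphic to $\ASL(2, \ZZ)$ and matches the ambiguity in $\Gamma \backslash \Gamma P_0$. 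An Iwasawa-like reduction to upper-triangular form then produces $m_2$ and $\mu_2$, with $F(\mu_2) \equiv 0 \pmod{m_2}$ forced by the integrality of an associated sublattice.

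The main obstacle is verifying that the two reductions interact cleanly: that $m_2 = N(\xi)/m_1^2$ emerges naturally, that the coprimality $\gcd(m_1, m_2, \mu_1 - \mu_2) = 1$ holds, and that $\lambda$ is uniquely determined modulo $m_1 m_2$. Ideal-theoretically, I expect this to correspond to a unique two-step factorization of the integral ideal $\xi I_l \subset \ZZ[\alpha]$ as a flag of sublattices of indices controlled by $m_1$ and $m_2$ with associated roots $\mu_1$ and $\mu_2$; the coprimality is exactly the rigidity of that flag. Bijectivity of the parametrization would then follow by reversing the construction: given data $(\xi, \mu_1, m_1, \mu_2, m_2, \lambda)$ satisfying the stated conditions, the explicit matrix in (\ref{eq:intersectioncoordinates1}) together with $\tilde a(\xi)$ reconstructs $p_0 a(t)$, from which the intersection point is recovered by left multiplication by $\gamma^{-1}$.
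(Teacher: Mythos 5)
Your geometric frame is sound and agrees with the paper's: a point $\Gamma g_l \tilde a(\xi) g_0^{-1}$ lies in $\Gamma P$ exactly when the first column of $g_l \tilde a(\xi) g_0^{-1}$ is proportional to a primitive integer vector, the resulting scalar fixes the $a(t)$-coordinate so that $t \leq T$ becomes $N(\xi) \leq \e^{6T}$, and the converse step (that any $a \in A$ giving an intersection must be $\tilde a(\xi)$ with $\xi \in I_l^{-1}$, up to units) is exactly the integrality argument you gesture at; the paper carries it out with a change-of-basis matrix $B \in \GL(3, \ZZ)$ from lemma 13 of \cite{MR4467125}. But the substance of the theorem is the arithmetic of $\mu_1, m_1, \mu_2, m_2, \lambda$, and this is precisely what your proposal labels ``the main obstacle'' and then does not prove. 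The paper does not obtain these data from an Iwasawa-type reduction of the lower-right block; it imports from \cite{MR4467125} the complete Hermite-normal-form criterion for when the lattice with basis (\ref{eq:latticebasis}) is an ideal, namely $F(\mu_j) \equiv 0 \bmod{m_j}$ together with the compatibility congruence (\ref{eq:idealcondition1}) modulo $\gcd(m_1, m_2)$ and the congruences (\ref{eq:idealcondition2})--(\ref{eq:kappadef}) that determine $\lambda$. Some such explicit criterion is unavoidable: ``integrality of an associated sublattice'' does not by itself yield $F(\mu_2) \equiv 0 \bmod{m_2}$ or the uniqueness of $\lambda$.

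The sharpest gap is the coprimality $\gcd(m_1, m_2, \mu_1 - \mu_2) = 1$, which you attribute to ``the rigidity of the flag.'' This condition is not a formal consequence of any flag or reduction structure; it is an arithmetic consequence of the hypothesis that $\ZZ[\alpha]$ is a maximal order, and a correct proof must invoke that hypothesis here. The paper's route is Lemma \ref{lemma:noninvertibility}: if a prime $p$ divided $\gcd(m_1, m_2, \mu_1 - \mu_2)$, then solvability of (\ref{eq:idealcondition1}) and (\ref{eq:kappadef}) would force $F'(\mu) \equiv 0 \bmod p$ and $F(\mu_j) \equiv 0 \bmod {p^2}$ for some $j$, which produces ideals with $I_1 I_2 = p I_1$ and hence a non-invertible ideal, contradicting maximality; conversely, when the gcd equals $1$ the congruences are automatically solvable and $\lambda$ is unique modulo $m_1 m_2$. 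Your proposal contains no mechanism that uses maximality, so it cannot establish that the parametrization in (\ref{eq:intersectioncoordinates1}) is exactly (rather than merely contains) the set of intersection points. A small additional slip: the vector $\bigl(1,\ a_1 + \alpha^{(1)},\ (\alpha^{(1)})^2 + a_1\alpha^{(1)} + a_2\bigr)/F'(\alpha^{(1)})$ is a row of $g_0^{-1}$ (the trace-dual basis element), not the column $g_0^{-1}e_1$, whose entries are $1/F'(\alpha^{(j)})$ up to normalization; this does not affect the strategy but should be fixed in any actual computation.
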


Our main result is the following.

\begin{theorem}
  \label{theorem:equidistribution}
  For each $l$, the points (\ref{eq:intersectioncoordinates}) and (\ref{eq:intersectioncoordinates1}) corresponding to intersections of $\Gamma g_l A g_0^{-1}$ and $\bigcup_{t\leq T}\Gamma P_0 a(t)$ are jointly equidistributed as $T \to \infty$ on $(g_l^{-1} \Gamma g_l \cap A) \backslash A$ and $\Gamma \backslash \Gamma P_0$ with respect to Haar measure.
\end{theorem}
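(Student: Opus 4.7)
The plan is to use the parametrization in Theorem \ref{theorem:rootsintersections} to reformulate the joint equidistribution as an equidistribution statement for $a(t)$-translates of the closed $A$-orbit $\Gamma g_l A g_0^{-1}$, paralleling Marklof's treatment \cite{MR3079137} of the 2-dimensional Farey sequence. Fix test functions $f \in C_c((g_l^{-1}\Gamma g_l \cap A)\backslash A)$ and $\Phi \in C_c(\Gamma \backslash \Gamma P_0)$. By Theorem \ref{theorem:rootsintersections},
\begin{equation*}
S_T(f, \Phi) := \sum_{[\xi]\,:\,N(\xi) \leq \e^{6T}} f(\tilde a(\xi))\, \Phi(p_0(\xi)),
\end{equation*}
where $[\xi]$ ranges over $\mathcal U^+$-orbits of totally positive $\xi \in I_l^{-1}$ and $p_0(\xi)$ is as in (\ref{eq:intersectioncoordinates1}); the goal is to show $S_T(f, \Phi) \sim c\,\e^{6T} \int f\,dm_A \int \Phi\,dm_{P_0}$.

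After smoothing $\Phi$ by an $\epsilon$-bump $\chi_\epsilon$ in the $a(t)$-direction to form a test function $\widetilde\Phi_\epsilon$ on $\Gamma \backslash G$ (via $\Gamma P_0 \cdot \{a(t)\} \cong \Gamma P_0 \times \RR$), the sum $S_T(f, \Phi)$ becomes, up to an $O(\epsilon)$ error,
\begin{equation*}
\int_{(g_l^{-1}\Gamma g_l \cap A)\backslash A} f(a) \int_0^T \widetilde\Phi_\epsilon(\Gamma g_l a g_0^{-1} a(-t))\,dt\,da.
\end{equation*}
Thus the proof reduces to effective equidistribution of the $a(t)$-translates of the closed orbit in $\Gamma\backslash G$, evaluated against $\widetilde\Phi_\epsilon$, with uniformity strong enough to survive integration over $a$ and the limit $\epsilon \to 0$.

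For the constant spectral component of $\widetilde\Phi_\epsilon$ this reduces to counting $\mathcal U^+$-orbits in $I_l^{-1}$ of totally positive $\xi$ with $N(\xi) \leq \e^{6T}$ weighted by $f(\tilde a(\xi))$. Equidistribution of $\tilde a(\xi) \bmod \mathcal U^+$ follows from classical Shintani-type lattice-point counting in a fundamental domain for $\mathcal U^+$ acting on the totally positive cone, or equivalently from the partial Dedekind zeta function of $\QQ(\alpha)$ restricted to the class of $I_l^{-1}$. For non-constant components one route is decay of matrix coefficients (Howe--Moore) for $\SL(3, \RR)$ applied to the closed-orbit period against $\widetilde\Phi_\epsilon$. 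Alternatively, Fourier-decomposing $\Phi$ along $P_0 \cong \ASL(2, \RR)$ and spectrally decomposing on $\SL(2, \ZZ)\backslash \SL(2, \RR)$ turns the error into exponential sums indexed by pairs of roots $(\mu_1, \mu_2)$ of the cubic congruences $F(\mu_j) \equiv 0 \bmod m_j$, weighted by Whittaker coefficients of Maass forms or Eisenstein series. Since the factorization $N(\xi) = m_1^2 m_2$ records a prime-type decomposition of $(\xi)$ in $\ZZ[\alpha]$, the additive phases lift to Hecke characters on $\QQ(\alpha)$ and each sum becomes a partial sum of a twisted Hecke $L$-function.

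The principal obstacle is uniformity. Equidistribution of the $A$-coordinate alone follows from the analytic theory of Hecke $L$-functions on $\QQ(\alpha)$, as noted in the introduction, but joint equidistribution requires simultaneous control of $L$-functions twisted by additive characters of moduli $m_1, m_2$ and by automorphic spectral parameters, uniformly in all aspects. In the quadratic analogue \cite{MR4654052, MR4795423} the corresponding exponential sums reduce to Kloosterman or Salié sums enjoying Weil-type cancellation, but no such elementary reduction is known in the cubic case, so the $L$-function machinery must be sustained throughout; likewise, in the dynamical alternative, effective Howe--Moore decay together with a careful treatment of cusp excursions of the closed $A$-orbit is the main technical hurdle.
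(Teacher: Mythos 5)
Your reduction of the theorem, via Theorem \ref{theorem:rootsintersections}, to an equidistribution statement for translates of the closed orbit against a thickened section of $\bigcup_t \Gamma P_0 a(t)$ is the right starting point and parallels the paper's surface-of-section argument. The genuine gap is in the equidistribution input you then invoke. You propose to treat the nonconstant spectral components either by Hecke $L$-functions twisted by additive characters of moduli $m_1, m_2$, or by effective Howe--Moore decay applied to the closed-orbit period, and you then correctly observe that the required uniformity is not available and that no Weil-type cancellation is known in the cubic case. As written, this reduces the theorem to an open problem rather than proving it. The paper needs none of this arithmetic input: the key observation (Corollary \ref{corollary:expandingflats}) is that the conjugated flat $g_l A g_0^{-1}$ is transverse to the stable and neutral directions of $a(-T)$ and, via the LU decomposition (\ref{eq:LUdecomp}), projects onto the \emph{full} unstable horosphere $N$. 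Hence its $a(-T)$-translates inherit equidistribution from the soft horosphere theorem (Theorem \ref{theorem:horosphereequidistribution}, i.e.\ Margulis thickening/mixing), after covering the flat by small squares on which the LU data is nearly constant and discarding the $O(\epsilon)$-measure set where the decomposition degenerates (Lemmas \ref{lemma:mcDgeometry} and \ref{lemma:badLU}). Note that translating by $a(-T)$ is only nontrivial because of the conjugation by $g_0^{-1}$ --- $A$ itself commutes with $a(-T)$ --- and your proposal never engages with this mechanism, which is the heart of the proof.

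Two secondary problems. First, your thickening has the wrong codimension: $\Gamma P_0$ has codimension $3$ in $\Gamma\backslash G$, while the set $\{\Gamma g_l a g_0^{-1} a(-t) : a \in A,\ 0 \le t \le T\}$ is $3$-dimensional, so to convert the point count into an integral you must thicken in all three transverse directions (the $a(t)$ direction \emph{and} the two $N$ directions), not only in $a(t)$; the paper instead thickens the $6$-dimensional section $S$ of (\ref{eq:Sdef}) in the two $N$-directions and integrates over the $2$-dimensional flat, each intersection contributing $\pi\epsilon^2\ee^{-6T}$, with the Jacobian $\ee^{6t}$ of (\ref{eq:Haardecomp}) accounting for the measure $\nu$ in the limit. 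Second, passing from compactly supported test functions on a truncated section to the full statement requires non-divergence estimates controlling the intersections escaping into the cusp of the $P_0$-component and those with $N(\xi)$ small; these are the content of Lemmas \ref{lemma:smalltobigcusp}, \ref{lemma:setincusp} and \ref{lemma:idealcount}, and you flag the issue only as a ``technical hurdle'' without an argument.
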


Note that \eqref{eq:intersectioncoordinates1} in fact gives a point in $\Gamma \backslash \Gamma B_0 a_1(t_1)$ with $B_0$ the subgroup of upper triangular unipotent matrices.
Also note that one can recover $\tfrac{\mu_1 + a_1}{m_1} \bmod 1$ and $\tfrac{mu_2}{m_2} \bmod 1$ from the $\Gamma \cap B_0$-orbit of the matrix in \eqref{eq:intersectioncoordinates1}.
However, as the intersections between $\Gamma B$, $B$ the subgroup of upper triangular matrices with positive diagonal, and $\Gamma g_l A g_0^{-1}$ are unlikely in the sense that they are between five- and two-dimensional submanifolds of an eight-dimensional space, we are as yet unable to study these points with our methods.

In contrast, when considered as points in $\Gamma P_0$, where the intersections between $\Gamma P$ and $ g_l A g_0^{-1}$ are no longer unlikely, one is lead to the $\Gamma \cap P_0$-orbit of the matrix in \eqref{eq:intersectioncoordinates1}.
Due to this much larger group, it is unclear if one can obtain information on the distribution of the congruence-roots in $\RR/ \ZZ$ from these orbits.

\section{Proof of theorem \ref{theorem:rootsintersections}}
\label{sec:congruences}

From the proof of theorem 6 in \cite{MR4467125}, we see that a lattice contained in $\ZZ[\alpha]$ is an ideal if and only it has a basis $\{\beta_1, \beta_2, \beta_3\}$ of the form
\begin{equation}
  \label{eq:latticebasis}
  \begin{pmatrix}
    \beta_1 \\
    \beta_2 \\
    \beta_3
  \end{pmatrix}
  = a
  \begin{pmatrix}
    1 & \mu_1 + a_1 & \lambda \\
    0 & m_1 & - \mu_2 m_1 \\
    0 & 0 & m_1 m_2
  \end{pmatrix}
  \begin{pmatrix}
    \alpha^2 \\
    \alpha \\
    1
  \end{pmatrix}
  ,
\end{equation}
where $a \in \ZZ$, $F(\mu_j) \equiv 0 \bmod m_j$,
\begin{equation}
  \label{eq:idealcondition1}
  \mu_1^2 + \mu_1 \mu_2 + \mu_2^2 + a_1(\mu_1 + \mu_2) + a_2 \equiv 0 \pmod {\gcd(m_1, m_2)},
\end{equation}
and
\begin{multline}
  \label{eq:idealcondition2}
  \lambda \equiv (\mu_1^2 + a_1 \mu_1 + a_2) \frac{\overline{m_2}m_2}{\gcd(m_1, m_2)} - ( \mu_2^2 + \mu_1 \mu_2  + a_1 \mu_2) \frac{\overline{m_1}m_1}{\gcd(m_1, m_2)} \\
  + \kappa \frac{m_1m_2}{\gcd(m_1, m_2)} \pmod { m_1m_2},
\end{multline}
where
\begin{equation}
  \label{eq:overlinemdef}
  \frac{\overline{m_1}m_1}{\gcd(m_1, m_2)} + \frac{\overline{m_2}m_2}{\gcd(m_1, m_2)} = 1
\end{equation}
and $\kappa$ is a solution to
\begin{equation}
  \label{eq:kappadef}
  (\mu_2 - \mu_1)\kappa \equiv \frac{F(\mu_1)}{m_1} \overline{m_2} + \frac{F(\mu_2)}{m_2} \overline{m_1} \pmod {\gcd(m_1, m_2)}. 
\end{equation}
For example, if $p$ is a prime and $F(\mu) \equiv 0 \bmod p$, the lattices associated to the following matrices are ideals:
\begin{equation}
  \label{eq:idealexamples}
  \begin{pmatrix}
    1 & 0 & -\mu^2 \\
    0 & 1 & -\mu \\
    0 & 0 & p
  \end{pmatrix}
  \quad \mathrm{and}\quad
  \begin{pmatrix}
    1 & \mu + a_1 & \mu^2 + a_1 \mu + a_2 \\
    0 & p & 0 \\
    0 & 0 & p
  \end{pmatrix}
  .
\end{equation}

To prove theorem \ref{theorem:rootsintersections}, we begin with the following lemma, which shows that certain congruences cannot be satisfied under the assumption that all ideals are invertible, i.e. that $\ZZ[\alpha]$ is a maximal order.
\begin{lemma}
  \label{lemma:noninvertibility}
  Suppose that a prime $p$ and $\mu \bmod {p^2}$ satisfy $F(\mu) \equiv 0 \bmod {p^2}$ and $F'(\mu) \equiv 0 \bmod p$.
  Then we have $J_1 J_2 = p J_1$ where $J_1$ and $J_2$ are the ideals corresponding to the left and right matrices in \eqref{eq:idealexamples}, respectively. 
\end{lemma}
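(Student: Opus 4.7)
The plan is to recognize $I_1$ and $I_2$ as having simple two-element generating sets in $\ZZ[\alpha]$. From (\ref{eq:idealexamples}), the $\ZZ$-basis of $I_1$ is $\{\alpha^2 - \mu^2,\ \alpha - \mu,\ p\}$, and since $\alpha^2 - \mu^2 = (\alpha - \mu)(\alpha + \mu)$, we have $I_1 = (\alpha - \mu,\ p)$ as a $\ZZ[\alpha]$-ideal. Similarly, writing
\[
\beta := \alpha^2 + (\mu + a_1)\alpha + (\mu^2 + a_1\mu + a_2),
\]
the $\ZZ$-basis of $I_2$ is $\{\beta,\ p\alpha,\ p\}$, and $p\alpha$ is absorbed into $(p)$, giving $I_2 = (\beta,\ p)$.

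The computational engine is the polynomial identity obtained by dividing $F(X)$ by $X - \mu$, namely
\[
F(X) = (X - \mu)\bigl(X^2 + (\mu + a_1)X + (\mu^2 + a_1\mu + a_2)\bigr) + F(\mu).
\]
Evaluating at $X = \alpha$ and using $F(\alpha) = 0$ yields $(\alpha - \mu)\beta = -F(\mu)$. Using this together with the hypothesis $F(\mu) \equiv 0 \pmod{p^2}$, expanding the product by pairs of generators gives
\[
I_1 I_2 = \bigl((\alpha - \mu)\beta,\ p(\alpha - \mu),\ p\beta,\ p^2\bigr) = \bigl(p(\alpha - \mu),\ p\beta,\ p^2\bigr) = p \cdot (\alpha - \mu,\ \beta,\ p),
\]
since $(\alpha - \mu)\beta = -F(\mu) \in (p^2)$ is absorbed by the last generator. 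This reduces the claim $I_1 I_2 = pI_1$ to showing $\beta \in (\alpha - \mu,\ p) = I_1$.

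The final step invokes the hypothesis $F'(\mu) \equiv 0 \pmod{p}$. Reducing $\beta$ modulo $\alpha - \mu$ (that is, substituting $\alpha \mapsto \mu$) gives
\[
\beta \equiv \mu^2 + (\mu + a_1)\mu + (\mu^2 + a_1\mu + a_2) = 3\mu^2 + 2a_1\mu + a_2 = F'(\mu) \pmod{\alpha - \mu},
\]
and by hypothesis $F'(\mu) \in p\ZZ$, so $\beta \in (\alpha - \mu) + (p) = I_1$, completing the argument. I do not expect any serious obstacle: the lemma reduces to a short ideal-theoretic calculation whose content is the pair of identities $(\alpha - \mu)\beta = -F(\mu)$ and $\beta \equiv F'(\mu) \pmod{\alpha - \mu}$, which match the two hypotheses on $F(\mu)$ and $F'(\mu)$ to the two containment conditions needed.
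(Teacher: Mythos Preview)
Your proof is correct, but it proceeds differently from the paper's. The paper works directly with the $\ZZ$-bases $\{\beta_i\}$ and $\{\gamma_j\}$ of $I_1$ and $I_2$, computes all nine products $\beta_i\gamma_j$, and checks by hand that each lies in the $\ZZ$-span of $\delta_1=p\beta_1$, $\delta_2=p\beta_2$, $\delta_3=p\beta_3$; the hypotheses $p^2\mid F(\mu)$ and $p\mid F'(\mu)$ are used to see that the coefficients $\tfrac{F(\mu)}{p^2}$ and $\tfrac{F'(\mu)}{p}$ appearing in those expansions are integers. Your approach instead passes to two-element $\ZZ[\alpha]$-generating sets $I_1=(\alpha-\mu,p)$ and $I_2=(\beta,p)$, and then the whole computation collapses to the two polynomial identities $(\alpha-\mu)\beta=-F(\mu)$ and $\beta-F'(\mu)\in(\alpha-\mu)$, which line up exactly with the two hypotheses. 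This is cleaner and more transparent about \emph{why} the divisibility conditions on $F(\mu)$ and $F'(\mu)$ are precisely what is needed; the paper's version, on the other hand, requires no preliminary verification that the two-element sets actually generate the ideals as $\ZZ[\alpha]$-modules (a step you implicitly rely on and which follows from an index count).
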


\begin{proof}
  We let $\beta_1, \beta_2, \beta_3$ and $\gamma_1, \gamma_2, \gamma_3$ denote the basis elements of $J_1$ and $J_2$ according to the matrices (\ref{eq:idealexamples}).
  We have that
  \begin{equation}
    \label{eq:productbasis}
    \delta_1 := \beta_1 \gamma_3 = p\alpha^2 - p \mu^2,\ \delta_2 := \beta_2 \gamma_3 = p\alpha - p\mu,\ \delta_3 := \beta_3 \gamma_3 = p^2
  \end{equation}
  forms a basis for $p J_1$.
  Now
  \begin{equation}
    \label{eq:productideal}
    \begin{split}
      \beta_1 \gamma_2 & = p\alpha^3 - p\mu^2 \alpha = -a_1 \delta_1 - (a_2 + \mu^2)\delta_2 -\frac{F(\mu)}{p} \delta_3, \\
      \beta_2 \gamma_2 & = p \alpha^2 - p\mu \alpha = p \delta_1 - \mu \delta_2,\\
      \beta_3 \gamma_2 & = p^2 \alpha = p \delta_2 + \mu \delta_3,\\
      \beta_1 \gamma_1 & = -\mu F(\mu) - F(\mu) \alpha = -\frac{F(\mu)}{p} \delta_2 + 2 \frac{F(\mu)}{p^2} \delta_3, \\
      \beta_2 \gamma_1 & = - F(\mu) = - \frac{F(\mu)}{p^2} \delta_3, \\
      \beta_3 \gamma_1 & = p \alpha^2 + p(\mu + a_1) \alpha + p(\mu^2 + a_1 \mu + a_2) = \delta_1 + (\mu + a_1) \delta_2 + \frac{F'(\mu)}{p} \delta_3. 
    \end{split}
  \end{equation}
  These calculations show that $J_1J_2$ is generated by $\delta_1, \delta_2, \delta_3$, so $J_1 J_2 = p J_1$ as claimed. 
\end{proof}

\begin{proof}[Proof of theorem \ref{theorem:rootsintersections} for $\Gamma = \SL(3, \ZZ)$]
  The basis for a lattice $I\subset \ZZ[\alpha]$ given in (\ref{eq:latticebasis}) is unique up to left multiplication by unipotent upper triangular matrices in $\Gamma$.
  We note that if $\gcd(m_1, m_2, \mu_1 - \mu_2) = 1$, then $F(\mu_1) \equiv F(\mu_2) \equiv 0 \pmod{\gcd(m_1, m_2)}$ implies
  \begin{equation}
    \label{eq:disjoint}
    \mu_1^2 + \mu_1 \mu_2 + \mu_2^2 + a_1(\mu_1 + \mu_2) + a_2 = \frac{F(\mu_1) - F(\mu_2)}{\mu_1 - \mu_2} \equiv 0 \pmod {\gcd(m_1, m_2)},
  \end{equation}
  so the condition (\ref{eq:idealcondition1}) is satisfied.
  The equation (\ref{eq:kappadef}) can also be solved for a unique $\kappa$, so the condition $\gcd(m_1, m_2, \mu_1 - \mu_2) = 1$ implies $I$ is an ideal.

  Now suppose $I$ is an ideal and assume that a prime $p$ divides $\gcd(m_1, m_2, \mu_1 - \mu_2)$.
  The condition (\ref{eq:idealcondition1}) implies $F'(\mu) \equiv 0 \bmod p$ and (\ref{eq:kappadef}) having a solution implies $F(\mu_j) \equiv 0 \bmod p^2$ for either $j = 1$ or $j = 2$ as we can take $\overline{m_j} \equiv 1 \bmod p$ and the other $\overline{m_i} \equiv 0 \bmod p$.
  Using lemma \ref{lemma:noninvertibility} we then obtain a contradiction since $\ZZ[\alpha]$ is a maximal order. 
  We conclude that the condition $\gcd(m_1, m_2, \mu_1 - \mu_2) = 1$ is necessary and sufficient for $I$ to be an ideal.

  Every ideal $I$ is $\xi I_l$ for some totally positive $\xi \in I_l^{-1}$ that is unique up to multiplication by the totally positive units of $\ZZ[\alpha]$.
  Fixing $\ZZ$-bases $g_l$ for the narrow ideal class representatives $I_l$ as above, we obtain a basis for $I$ via $g_l \tilde{a}(\xi)$.
  There is some $\gamma \in \Gamma$ so that
  \begin{equation}
    \label{eq:changebasis}
    \gamma g_l \tilde{a}(\xi) = (m_1^2 m_2)^{-\frac{1}{3}}
    \begin{pmatrix}
      1 & \mu_1 + a_1 & \lambda \\
      0 & m_1 & - \mu_2 m_1 \\
      0 & 0 & m_1 m_2
    \end{pmatrix}
    g_0.
  \end{equation}
  This shows that the points described in theorem \ref{theorem:rootsintersections} are indeed in the intersection of $\Gamma g_l A g_o^{-1} \cap \bigcup_{t \leq T} \Gamma P_0 a(t)$.
  
  To show that the intersection contains no other points, we fix $\ZZ$-bases $\{\overline{\beta}_1, \overline{\beta}_2, \overline{\beta}_3\}$ for $I_l^{-1}$ and define the three $3\times 3$ integer matrices $B_k = (b_{ijk})$ by
  \begin{equation}
    \label{eq:Bkdef}
    \overline{\beta}_k\beta_i  = b_{i1k} \alpha^2 + b_{i2k} \alpha + b_{i3k} \textrm{ so that }
    \begin{pmatrix}
      \overline{\beta}_k\beta_1 \\
      \overline{\beta}_k\beta_2 \\
      \overline{\beta}_k\beta_3
    \end{pmatrix}
    = B_k g_0,
  \end{equation}
  where $\{\beta_1, \beta_2, \beta_3\}$ is a $\ZZ$-basis for $I_l$.
  We let $B$ be the matrix with $k$th column the first column of $B_k$, then from lemma 13 of \cite{MR4467125} we have that $B \in \GL(3, \ZZ)$.

  Let $\xi = c_1 \overline{\beta}_1 + c_2 \overline{\beta}_2 + c_3 \overline{\beta}_3$, so that $\xi \in I_l^{-1}$ if and only if all the $c_i$ are integers.
  We have
  \begin{equation}
    \label{eq:xiintegral}
    g_l \tilde{a}(\xi) =
    (N(I_l)N(\xi))^{-\frac{1}{3}}
    \begin{pmatrix}
      \xi\beta_1 \\
      \xi\beta_2 \\
      \xi\beta_3
    \end{pmatrix}
    = (N(I_l)N(\xi))^{-\frac{1}{3}} (c_1 B_1 + c_2 B_2 + c_2 B_3) g_0,
  \end{equation}
  so if there is $\gamma \in \Gamma$ so that
  \begin{equation}
    \label{eq:integrality}
    \gamma g_l \tilde{a}(\xi) =
    \begin{pmatrix}
      * & * & * \\
      0 & * & * \\
      0 & * & * 
    \end{pmatrix}
    g_0,
  \end{equation}
  then
  \begin{equation}
    \label{eq:cieq}
    B
    \begin{pmatrix}
      c_1 \\
      c_2 \\
      c_3
    \end{pmatrix}
    = \gamma^{-1}
    \begin{pmatrix}
      * \\
      0 \\
      0 
    \end{pmatrix}
    . 
  \end{equation}
  It follows that $(c_1, c_2, c_3)$ is a multiple of an integer vector, so after scaling we have $\xi \in I_l^{-1}$.
  That the intersection $\Gamma g_l A g_0^{-1} \cap \Gamma P$ only has points as in the right side of (\ref{eq:changebasis}) now follows as above from the fact that $I = \xi I_l$ is an ideal contained in $\ZZ[\alpha]$. 
\end{proof}

\section{Proof of theorem \ref{theorem:equidistribution}}
\label{sec:equidistribution}

\subsection{Fundamental domain for the totally positive units}
\label{sec:units}

In this section we construct a simple fundamental domain $\mcD \subset \RR_{>0}^3$ for the action of the totally positive units on $\RR_{>0}^3$, meaning for every $\xi \in \RR_{>0}^3$, there is exactly one totally positive unit $u$ such that $\xi u \in \mcD$.
Via the embeddings, $\mcD$ also serves as a fundamental domain for the action of the totally positive units on the totally positive elements of $\QQ(\alpha)$.

By Dirichlet's unit theorem, the map from $\RR_{>0}^3 \to \RR^3$ given by $\ell : (x_1, x_2, x_3) \mapsto (\log x_1,\allowbreak \log x_2,\allowbreak \log x_3)$ sends the totally positive units to a rank two lattice contained in the plane
\begin{equation}
  \label{eq:unitplane}
  \{
  \begin{pmatrix}
    x_1 & x_2 & x_3
  \end{pmatrix}
  \in \RR^3 : x_1 + x_2 + x_3 = 0 \}. 
\end{equation}
We fix generators (eg, we may pick the reduced basis) of this lattice, $\ell(\varepsilon_1)$ and $\ell(\varepsilon_2)$, say, and we define $\mcD$ by
\begin{equation}
  \label{eq:mcDdef}
  \mcD  = \{ x \in \RR^3_{> 0} : \ell(x) = s_1 \ell (\varepsilon_1) + s_2 \ell(\varepsilon_2) \textrm{ with } s_1,s_2 \in (0,1]^2\}. 
\end{equation}

Let $\tilde \mcD$ be the intersection of $\mcD$ with the surface of $x \in \RR^3_{> 0}$ with $x_1x_2x_3 = 1$.
In what follows, we need the following geometric observation about $\tilde\mcD$. 
\begin{lemma}
  \label{lemma:mcDgeometry}
  Any plane through the origin that intersects $\tilde\mcD$ does so transversely in a curve of bounded length.
\end{lemma}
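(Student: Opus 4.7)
The plan is to carry out a pointwise transversality check on the surface $\Sigma = \{x \in \RR^3_{>0} : x_1 x_2 x_3 = 1\}$, in which $\tilde{\mcD}$ sits as a closed region. Since any plane $\Pi$ through the origin and $\Sigma$ are both $2$-dimensional in the ambient $\RR^3$, at an intersection point $x \in \Pi \cap \Sigma$ transversality amounts to $T_x \Pi + T_x \Sigma = \RR^3$; with both summands $2$-dimensional, this reduces to $T_x\Pi = \Pi$ and $T_x\Sigma$ being distinct subspaces, equivalently the normal $v$ of $\Pi$ not being parallel to the gradient $n(x) = (x_2 x_3, x_1 x_3, x_1 x_2)$ of $x_1 x_2 x_3$ at $x$.

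The key ingredient I would use is the Euler-type identity $\langle x, n(x)\rangle = x_1(x_2 x_3) + x_2(x_1 x_3) + x_3(x_1 x_2) = 3 x_1 x_2 x_3$, which equals $3$ on all of $\Sigma$. Geometrically, this says that the affine tangent plane to $\Sigma$ at $x$ does not pass through the origin, or equivalently, that the position vector $x$ itself is never contained in the tangent subspace $T_x \Sigma$.

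To conclude, suppose $\Pi$ has normal $v \neq 0$ and passes through both the origin and a point $x \in \tilde{\mcD} \subset \Sigma$. From $x \in \Pi$ we have $\langle x, v\rangle = 0$. If $v$ were parallel to $n(x)$, say $v = c\, n(x)$, the Euler identity would give $\langle x, v\rangle = 3c$, forcing $c = 0$ and contradicting $v \neq 0$. Hence $v$ is not parallel to $n(x)$, the subspaces $\Pi$ and $T_x \Sigma$ are distinct and $2$-dimensional, so together they span $\RR^3$. This gives transversality at every point of $\Pi \cap \tilde{\mcD}$.

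There is really no serious obstacle here: the argument is a one-line computation resting on the Euler identity for the defining cubic, and it invokes no special feature of the fundamental domain $\mcD$ or of the unit group beyond the inclusion $\tilde{\mcD} \subset \Sigma$. The only thing to be mindful of is that the transversality test is pointwise, so the same argument handles boundary points of $\tilde{\mcD}$ on $\Sigma$ as well as interior ones.
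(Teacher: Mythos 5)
Your proof is correct and is essentially the paper's argument made explicit: both reduce transversality to the fact that the surface normal $n(x)=(x_2x_3,x_1x_3,x_1x_2)$ at $x\in\tilde{\mcD}$ is not orthogonal to the position vector $x$ (you via the Euler identity $\langle x,n(x)\rangle=3$, the paper via positivity of all entries together with $\tilde{\mcD}$ being bounded away from the coordinate planes and the origin), so no plane through the origin can be tangent there. The only thing worth adding is that the later application in Lemma \ref{lemma:badLU} implicitly uses a uniform version of this transversality, which follows from your pointwise statement by compactness of $\tilde{\mcD}$.
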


\begin{proof}
  We observe that since $\tilde\mcD$ is contained in the first quadrant, the normal of any intersecting plane must both positive and negative components.
  The normal vectors to $\tilde\mcD$ have positive components, so cannot be parallel.

  The claim on the length of the intersection follows from the compactness of $\tilde\mcD$. 
\end{proof}

\subsection{Equidistribution of expanding flats}
\label{sec:expandingflats}

We recall that
\begin{equation}
  \label{eq:Ndef}
  N = \{ n(x_1, x_2) =
  \begin{pmatrix}
    1 & 0 & 0 \\
    x_1 & 1 & 0 \\
    x_2 & 0 & 1
  \end{pmatrix}
  : x_1, x_2 \in \RR\}.
\end{equation}
As $\Gamma N a(-T)$ is the expanding horosphere as $T\to\infty$, it is easily seen to equidistribute in $\Gamma \backslash G$ from Margulis thickening, see \cite{MR2035655}, \cite{MR1359098}; theorem 5.3 in \cite{MR2726104} states the following.
\begin{theorem}
  \label{theorem:horosphereequidistribution}
  For any continuous, bounded function $f$ on $\TT^2 \times \Gamma \backslash G$,
  \begin{equation}
    \label{eq:effectiveequidistribution}
    \lim_{T \to \infty} \int_{\TT^2} f(\bm x, \Gamma n(\bm x) a(-T)) \dd \bm x = \int _{\TT^2} \int_{\Gamma \backslash G} f(\bm x,  g) \dd \mu(g) \dd \bm x,
  \end{equation}
  where $\mu $ is Haar measure on $G$ normalized to be a probability measure on $\Gamma \backslash G$. 
\end{theorem}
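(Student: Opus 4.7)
The plan is to deduce this joint equidistribution from the classical equidistribution of the closed horosphere $\Gamma N a(-T)$ in $\Gamma\backslash G$ by a localization in the $\bm x$ coordinate. By Stone--Weierstrass, linear combinations of product test functions $f(\bm x, g) = f_1(\bm x) f_2(g)$ with $f_1 \in C(\TT^2)$ and $f_2 \in C_c(\Gamma\backslash G)$ are dense in $C_c(\TT^2 \times \Gamma\backslash G)$, and after a tightness step this reduces (\ref{eq:effectiveequidistribution}) to
\[
  \int_{\TT^2} f_1(\bm x) f_2(\Gamma n(\bm x) a(-T)) \dd \bm x \longrightarrow \int_{\TT^2} f_1 \dd \bm x \cdot \int_{\Gamma\backslash G} f_2 \dd \mu
\]
for each such product.

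For fixed $f_1, f_2$ I would partition $\TT^2$ into small balls $B_j(\varepsilon)$ of diameter at most $\varepsilon$ with centers $\bm x_j$, and approximate $f_1$ by the constant $f_1(\bm x_j)$ on each $B_j(\varepsilon)$. The error committed is at most $\omega_{f_1}(\varepsilon) \| f_2 \|_\infty$, where $\omega_{f_1}$ is the modulus of continuity of $f_1$. On each ball, the standard horosphere equidistribution, in the form of \cite[Theorem~5.3]{MR2726104}, yields
\[
  \int_{B_j(\varepsilon)} f_2(\Gamma n(\bm x) a(-T)) \dd \bm x \to \mathrm{vol}(B_j(\varepsilon)) \int_{\Gamma\backslash G} f_2 \dd \mu
\]
as $T \to \infty$; summing in $j$ and then letting $\varepsilon \to 0$ yields the desired factorization. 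If one prefers not to use the cited black box, the same argument can be carried out from scratch via Margulis' thickening, replacing $f_2(\Gamma n(\bm x) a(-T))$ by a smooth average over a small neighborhood of $e$ in a local transversal $H$ to $N$, conjugating this thickening through $a(-T)$ so that it spreads uniformly in the expanding direction, and invoking mixing of $a(-t)$.

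The main obstacle is the tightness step needed to pass from $f_2 \in C_c$ to general continuous bounded $f_2$, which amounts to controlling the mass of $\Gamma n(\bm x) a(-T)$ in the cusp uniformly in $\bm x \in \TT^2$ and $T$. On $\SL(3,\ZZ)\backslash \SL(3,\RR)$ this non-escape follows from Minkowski's theorem on successive minima applied to the lattices in the image, and is in any case the standard non-divergence input already built into the Kleinbock--Margulis machinery. Once it is in place, the localization argument above completes the proof without further difficulty.
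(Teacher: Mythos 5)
Your argument is correct, but it is worth knowing that the paper does not prove this statement at all: it is quoted verbatim as Theorem~5.3 of \cite{MR2726104} (Marklof--Str\"ombergsson), which is already the \emph{joint} equidistribution statement with the $\bm x$-variable included. Your proposal instead derives the joint statement from the weaker input of equidistribution of small pieces of the expanding horosphere, via Stone--Weierstrass reduction to product test functions and localization of $f_1$ to constants on an $\varepsilon$-partition of $\TT^2$; this is a standard and perfectly valid route, and the from-scratch option you mention (thickening a transversal neighborhood, pushing it through $a(-T)$, and invoking mixing) is exactly how the localized input is usually established. Two small remarks. First, as written there is a mild circularity in citing Theorem~5.3 of \cite{MR2726104} for the localized statement $\int_{B_j}f_2(\Gamma n(\bm x)a(-T))\,\dd\bm x\to \mathrm{vol}(B_j)\int f_2\,\dd\mu$, since that theorem is the very statement being proved; you should either cite a plain (non-joint) horosphere equidistribution result for open pieces or rely on the thickening argument you sketch. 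Second, the tightness step is lighter than you suggest: once the localized convergence holds for $f_2\in C_c$, the measures $\mathrm{vol}(B_j)^{-1}\int_{B_j}\delta_{\Gamma n(\bm x)a(-T)}\,\dd\bm x$ are probability measures converging weak-$*$ to the Haar probability measure, so no mass escapes and the convergence upgrades to all bounded continuous $f_2$ automatically; no appeal to Minkowski or Kleinbock--Margulis non-divergence is needed.
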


Using theorem \ref{theorem:horosphereequidistribution} we prove corollary \ref{corollary:expandingflats} on the equidistribution of translates of the closed orbit $\Gamma g_l A$ by $g_0 a(-T)$.
This is almost an immediate consequence of theorem \ref{theorem:horosphereequidistribution} since the set $g_l A g_0^{-1}$ projects to the full unstable manifold of $a_1(-T)$.
We first estimate the measure of the subset of $g_l A g_0^{-1}$ on which the LU decomposition is badly behaved.
\begin{lemma}
  \label{lemma:badLU}
  For any $\epsilon >0$, let $\mathcal{B}(\epsilon)$ be the set of $a \in \tilde\mcD$ such that the (1,1)-entry of $g_l a g_0^{-1}$ has absolute value at most $\epsilon$.
  Then $\mathcal{B}(\epsilon) $ has measure $O(\epsilon)$ with respect to Haar measure on $A$ and the boundary of $\mathcal{B}(\epsilon)$ has length $O(1)$.
\end{lemma}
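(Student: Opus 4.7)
The plan is to reduce the lemma to a transversality estimate on $\tilde{\mcD}$. For $a = \mathrm{diag}(a_1, a_2, a_3) \in A$, direct matrix multiplication gives
\[
(g_l a g_0^{-1})_{11} = c_1 a_1 + c_2 a_2 + c_3 a_3 =: L(a),
\]
where $c_i$ is the product of the $(1,i)$-entry of $g_l$ with the $(i,1)$-entry of $g_0^{-1}$. Applying Cramer's rule to the Vandermonde-like matrix $g_0$ shows that the first column of $g_0^{-1}$ is, up to signs, $(\alpha^{(2)}-\alpha^{(3)},\,\alpha^{(3)}-\alpha^{(1)},\,\alpha^{(1)}-\alpha^{(2)})^{T}$, with all entries nonzero since the embeddings are distinct; similarly the first row of $g_l$ consists of the three embeddings of a nonzero algebraic number in $I_l$, all nonzero. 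Hence every $c_i$ is nonzero and $L$ is a nontrivial linear form on $\RR^3$; the bad set $\mcB_\epsilon$ is exactly the intersection of the compact surface $\tilde{\mcD}$ with the $\epsilon$-slab around the plane $\Pi = \{L = 0\}$, which passes through the origin.

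Next I invoke Lemma \ref{lemma:mcDgeometry}: either $\Pi$ misses $\tilde{\mcD}$, in which case $\inf_{\tilde{\mcD}} |L| > 0$ by compactness and $\mcB_\epsilon$ is empty for sufficiently small $\epsilon$, or $\Pi$ meets $\tilde{\mcD}$ transversely. In the transverse case, $\Pi \cap \tilde{\mcD}$ is a finite union of smooth arcs of total length $\ell_0 = O(1)$, and transversality together with compactness of $\tilde{\mcD}$ produces a uniform positive lower bound on the tangential component of $\nabla L$ along some open neighborhood of $\Pi \cap \tilde{\mcD}$ in $\tilde{\mcD}$.

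A standard tubular-neighborhood estimate then bounds the Haar measure of $\mcB_\epsilon$ by $O(\epsilon \ell_0) = O(\epsilon)$, and controls $\partial \mcB_\epsilon$ as the union of the two level curves $\{L = \pm \epsilon\} \cap \tilde{\mcD}$, each a small smooth perturbation of the finite-length curve $\Pi \cap \tilde{\mcD}$, together with at most the portion of $\partial \tilde{\mcD}$ lying in the slab $\{|L| \leq \epsilon\}$; all contributions have length $O(1)$. The only substantive input is the transversality supplied by Lemma \ref{lemma:mcDgeometry}; everything else is routine, and I do not anticipate any real obstacle beyond packaging these pieces.
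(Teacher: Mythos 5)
Your proposal is correct and follows essentially the same route as the paper: express the $(1,1)$-entry as a linear form in the diagonal entries, invoke Lemma \ref{lemma:mcDgeometry} for transversality of its kernel with $\tilde\mcD$, and conclude with a tubular-neighborhood/bounded-gradient estimate. Your explicit verification that the coefficients $c_i$ are nonzero (via the Vandermonde structure of $g_0$) is a small but welcome addition that the paper leaves implicit.
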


\begin{proof}
  The $(1,1)$-entry of the matrix $g_l \tilde a(\xi) g_0^{-1}$ is given by the linear functional
  \begin{equation}
    \label{eq:linearfunction}
    \begin{pmatrix}
      \xi^{(1)} \\
      \xi^{(2)} \\
      \xi^{(3)}
    \end{pmatrix}
    \mapsto
    \begin{pmatrix}
      1 & 0 & 0 
    \end{pmatrix}
    g_l
    \begin{pmatrix}
      \xi^{(1)} & 0 & 0 \\
      0 & \xi^{(2)} & 0 \\
      0 & 0 & \xi^{(3)}
    \end{pmatrix}
    g_0^{-1}
    \begin{pmatrix}
      1 \\
      0 \\
      0
    \end{pmatrix}
    .
  \end{equation}
  By lemma \ref{lemma:mcDgeometry} the kernel of this functional intersects $\tilde\mcD$ transversely in a curve of bounded length.
  Moreover, the gradient of the linear functional is bounded, depending only on $g_l$ and $g_0$, so the lemma follows. 
\end{proof}

Define $\hat a : \TT^2 \to (g_l^{-1} \Gamma g_l \cap  A) \backslash A$ by $\hat a (s_1, s_2) = \tilde a(\varepsilon_1^{s_1}\varepsilon_2^{s_2})$.
We have the following consequence of theorem \ref{theorem:horosphereequidistribution}.

\begin{corollary}
  \label{corollary:expandingflats}
  For any continuous, bounded function $f$ on $\TT^2 \times \Gamma \backslash G$,
  \begin{equation}
    \label{eq:expandingflats}
    \lim_{T \to \infty} \int_{(g_l \Gamma g_l^{-1} \cap A) \backslash A} f( \bm s, g_l \hat a(\bm s) g_0^{-1} a(-T)) \dd \bm s = \int_{\TT^2} \int_{\Gamma \backslash G} f(\bm s, g) \dd \mu(g) \dd a.
  \end{equation}
\end{corollary}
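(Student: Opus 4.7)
The plan is to reduce Corollary \ref{corollary:expandingflats} to Theorem \ref{theorem:horosphereequidistribution} via an LU-type decomposition of $g_l a g_0^{-1}$ that extracts an $N$-factor. For $a$ in the good set $\tilde{\mcD} \setminus \mcB_\epsilon$ provided by Lemma \ref{lemma:badLU}, the $(1,1)$ entry of $g_l a g_0^{-1}$ has absolute value at least $\epsilon$, so we may uniquely factor
\begin{equation*}
g_l a g_0^{-1} = n(\bm x(a)) h(a)
\end{equation*}
with $n(\bm x) \in N$ and $h(a) \in P$, where $\bm x(a)$ consists of the $(2,1)$ and $(3,1)$ entries divided by the $(1,1)$ entry. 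Since $A$ is abelian and $a(T) P_0 a(-T) \subset P_0$ (with the top-row off-diagonal entries contracted by $\e^{-3T}$), we may push $a(-T)$ past $h(a)$, rewriting
\begin{equation*}
g_l a g_0^{-1} a(-T) = n(\bm x(a)) a(-T) \pi(a, T),
\end{equation*}
where $\pi(a, T) \in P$ is bounded and converges uniformly on the good set to a limit $\pi(a, \infty)$ as $T \to \infty$.

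Next, consider the map $\Phi: \bm s \mapsto \bm x(\hat a(\bm s)) \bmod \ZZ^2$ from $[0,1]^2$ to $\TT^2$. A transversality argument in the same spirit as Lemma \ref{lemma:mcDgeometry}, applied to the two linear functionals on $\RR^3$ yielding the $(2,1)$ and $(3,1)$ entries of $g_l \tilde a(\xi) g_0^{-1}$, shows that $\Phi$ is a local diffeomorphism outside a set of measure $O(\epsilon)$. Partition this residual good set into finitely many open pieces $U_j$ on each of which $\Phi$ is a diffeomorphism onto an image $V_j \subset \TT^2$ with smooth inverse $\bm s_j$ and Jacobian $J_j$. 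We then record the following simple strengthening of Theorem \ref{theorem:horosphereequidistribution}: for any continuous bounded $F : \TT^2 \times \Gamma \backslash G \to \RR$ and any continuous $\sigma : \TT^2 \to P$,
\begin{equation*}
\lim_{T \to \infty} \int_{\TT^2} F(\bm x, \Gamma n(\bm x) a(-T) \sigma(\bm x)) \dd \bm x = \int_{\TT^2} \int_{\Gamma \backslash G} F(\bm x, g) \dd \mu(g) \dd \bm x,
\end{equation*}
which is immediate from Theorem \ref{theorem:horosphereequidistribution} applied to $(\bm x, g) \mapsto F(\bm x, g \sigma(\bm x))$ together with the right-$G$-invariance of $\mu$.

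To finish, we apply this strengthening on each $V_j$ with $F(\bm x, g)$ a continuous approximation of $f(\bm s_j(\bm x), g) J_j^{-1}(\bm x)$ supported in $V_j$ and with $\sigma(\bm x) = \pi(\hat a(\bm s_j(\bm x)), \infty)$, after first replacing $\pi(a, T)$ by $\pi(a, \infty)$ in the prelimit using the uniform convergence. Summing over $j$ and undoing the change of variables recovers the right-hand side of (\ref{eq:expandingflats}) up to $O(\epsilon)$; letting $\epsilon \to 0$ completes the proof. The main technical obstacle I anticipate is the justification of the change of variables: controlling the critical set of $\Phi$, the behavior of $J_j^{-1}$ near where it blows up, and the cutoffs at the $\partial V_j$ and at the boundary of the good set, all of which should yield to the standard combination of partitions of unity, the Jacobian estimates implicit in Lemma \ref{lemma:mcDgeometry}, and a density argument for continuous bounded test functions.
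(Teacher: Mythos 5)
Your proposal is correct and follows essentially the same route as the paper: restrict to the complement of the bad set from Lemma \ref{lemma:badLU}, perform the LU-type factorization $g_l a g_0^{-1} = n(\bm x(a)) h(a)$, conjugate $a(-T)$ past the $P$-factor so that its top-row entries contract by $\e^{-3T}$, and feed the result into Theorem \ref{theorem:horosphereequidistribution}. The only difference is packaging: where you absorb the residual $P$-factor via a continuous section $\sigma(\bm x)$ and an explicit change of variables, the paper covers the good set by small squares and freezes that factor as a constant $g_S$ on each, which amounts to the same thing.
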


\begin{proof}
  We identify $(g_l \Gamma g_l^{-1}\cap A) \backslash A $ with $\tilde \mcD$ and we let $\mathcal{S}(\epsilon)$ be the set of $\bm s$ corresponding to $a \in \mathcal{B}(\epsilon)$.
  For $\bm s\not\in \mcS(\epsilon)$, we have the LU decomposition
  \begin{equation}
    \label{eq:LUdecomp}
    g_l \hat a (\bm s) g_0^{-1} a(-T)= n(\bm x (\bm s)) a(-T)a( r(\bm s))
    \begin{pmatrix}
      \pm 1 & \ee^{-3T} \bm u (\bm s) \\
      0 & V(\bm s)
    \end{pmatrix}
    ,
  \end{equation}
  for functions $r$, $\bm u$, $V$ that are uniformly continuous on $\TT^2 \setminus \mcS(\epsilon)$.
  From this continuity, for any $\delta_1 > 0$, there is $\delta_2 > 0$ (depending also on $\epsilon$) so that on any square $S \subset \TT^2 \setminus \mcS(\epsilon)$ with size $\delta_2$ there are $g_S \in G$ for which every $\bm s \in S$ satisfies
  \begin{equation}
    \label{eq:stableremove}
    n(\bm x (\bm s)) a(-T) a(r(\bm s))
    \begin{pmatrix}
      \pm 1 & \ee^{-3T} \bm u (\bm s) \\
      0 & V(\bm s)
    \end{pmatrix}
    = n(\bm x (\bm s)) a(-T) g_S(I + O(\delta_1)).
  \end{equation}
  By lemma \ref{lemma:badLU}, we can cover a subset of $\TT^2 \setminus \mcS(\epsilon)$ of measure $1 +O(\epsilon + \delta_2)$ with such squares $S$.
  
  From the continuity of $f$ with respect to the left invariant metric, it follows that we can choose $\delta_1 = \delta_1(\epsilon)$ small enough so that for $\bm s \in S$,
  \begin{equation}
    \label{eq:frewrite}
    f(\bm s, g_l \hat a(\bm s) g_0^{-1} a(-T)) = f(\bm s, n(\bm x (\bm s)) a(-T) g_S) + O(\epsilon). 
  \end{equation}
  We may now approximate the left hand side of (\ref{eq:expandingflats}) up to $O(\epsilon)$ by
  \begin{equation}
    \label{eq:freplace}
    \sum_S \int_S f_S(n(\bm x (\bm s)) a(-T)) \dd \bm s,
  \end{equation}
  where
  \begin{equation}
    \label{eq:f_Sdef}
    f_S( g) = f( \bm s_S, g g_S)
  \end{equation}
  for some choice of fixed points $\bm s_S \in S$.
  By theorem \ref{theorem:horosphereequidistribution}, we have for all $T$ sufficiently large, 
  \begin{equation}
    \label{eq:f_Sdistribution}
    \int_S f_S(\overline{n}(\bm x (\bm s)) a(-T)) \dd \bm s = |S| \int_{\Gamma \backslash G} f_S(g) \dd \mu(g) + O(\epsilon \delta_2^2);
  \end{equation}
  note that the map $\bm s \mapsto \bm x( \bm s)$ has continuously differentiable inverse away from $\mathcal{S}(\epsilon)$. 

  We now have
  \begin{equation}
    \label{eq:histogram}
    |S| f_S(g) = |S| f(\bm s_S, gg_S) = \int_S f(\bm s, gg_S) \dd s + O(\epsilon \delta_2^2)
  \end{equation}
  by taking $\delta_2$ smaller if necessary.
  Finally (\ref{eq:expandingflats}) follows by changing variables to remove $g_S$ and then summing over the squares $S$, recalling that they cover a subset of $\TT^2$ having complementary measure $O(\epsilon + \delta_2)$.
\end{proof}

\subsection{Surface of section}
\label{sec:section}

To detect the points in the intersection $\Gamma g_l A g_0^{-1} \cap \bigcup_{t\leq T}\Gamma P_0 a(t)$, we apply corollary \ref{corollary:expandingflats} with a test function $f$ coming from a slight thickening of a truncation of the surface $\bigcup_{t\leq 0}\Gamma P_0a(t)$.
Specifically, we let $\mathcal{F}_1(Y)$ denote the part of the standard fundamental domain for $\SL(2, \ZZ) \backslash \SL(2, \RR)$ with height at most $Y$, identified with the semisimple factor of the parabolic subgroup $P$, i.e. $g \in \SL(2, \RR) \mapsto 
\begin{pmatrix}
  1 & \\
  & g
\end{pmatrix}
\in P$.
We set
\begin{equation}
  \label{eq:Sdef}
  S = S(Y) = \{ \Gamma \bar n a(t) m : \bar n \in \bar N, -T_0 \leq t \leq 0, m \in \mcF_1(Y)\},
\end{equation}
where
\begin{equation}
  \label{eq:barNdef}
  \bar N = \{
  \begin{pmatrix}
    1 & x_1 & x_2 \\
    0 & 1 & 0 \\
    0 & 0 & 1
  \end{pmatrix}
  : x_1, x_2 \in \RR\}. 
\end{equation}
We then thicken $S$ to
\begin{equation}
  \label{eq:Sepsilondef}
  S_\epsilon = S_\epsilon (Y) = \{ \Gamma p n(\bm{x}) : \Gamma p \in S, |\bm x| \leq \epsilon \},
\end{equation}
where $\epsilon = \epsilon(T_0, Y) > 0$ is chosen small enough so that
\begin{equation}
  \label{eq:epsilonchoice}
  \Gamma \cap \{ p_1 n(\bm x_1) n(\bm x_2)^{-1}  p_2^{-1}: \Gamma p_j \in S, |\bm x_j | \leq \epsilon \} \subset \Gamma \cap P.
\end{equation}
This can be done as $\Gamma $ is discrete and the set in (\ref{eq:epsilonchoice}) is compact and, for any point $g \not\in P$, $g$ is not in this set for all sufficiently small $\epsilon$. 
We note that the containment (\ref{eq:epsilonchoice}) implies that $S_\epsilon$ does not self-intersect.

\begin{lemma}
  \label{lemma:selfintersections}
  The map $(\bar n, t, m, \bm x) \mapsto S_\epsilon$ gives smooth coordinates on $S_\epsilon$, so in particular the surface $S$ is smoothly embedded in $\Gamma \backslash G$ and its thickening $S_\epsilon$ has no self-intersections.
  Moreover, the Haar measure on $G$ in these coordinates is given by
  \begin{equation}
    \label{eq:Haardecomp}
    \dd \mu(\bar n a(t) m n) = \ee^{6t} \dd \bar n \dd t \dd m \dd n.
  \end{equation}
\end{lemma}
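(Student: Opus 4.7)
The plan has two parts: first, establish the smooth-coordinates/no-self-intersection claim using the defining property (\ref{eq:epsilonchoice}) of $\epsilon$ and the Langlands decomposition of $P$; second, compute the Jacobian of the multiplication map $(\bar n, t, m, n) \mapsto \bar n a(t) m n$ using the weight grading of $\mathfrak{g}$ under $\mathrm{Ad}(A_1)$.

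For the coordinate claim, suppose $(\bar n_j, t_j, m_j, \bm x_j)$, $j = 1, 2$, give the same point of $\Gamma \backslash G$. Setting $p_j = \bar n_j a(t_j) m_j \in P$, there exists $\gamma \in \Gamma$ with $\gamma p_1 n(\bm x_1) = p_2 n(\bm x_2)$, so $\gamma = p_2 n(\bm x_2) n(\bm x_1)^{-1} p_1^{-1}$ lies in the set appearing in (\ref{eq:epsilonchoice}), forcing $\gamma \in \Gamma \cap P$. Then $n(\bm x_2) n(\bm x_1)^{-1} = p_2^{-1} \gamma p_1 \in P \cap N = \{I\}$, so $\bm x_1 = \bm x_2$. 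The remaining identity $\gamma p_1 = p_2$ is analyzed via the Langlands decomposition $P = \bar N L$ with Levi $L = A_1 \times M_1 \cong \RR \times \SL(2, \RR)$: positivity of the $(1,1)$-entry $\e^{-2t}$ of $a(t)m$ gives $t_1 = t_2$; the fundamental-domain property of $\mcF_1(Y)$ for $\SL(2, \ZZ) \backslash \SL(2, \RR)$ gives $m_1 = m_2$ up to a measure-zero boundary; and finally $\bar n_1 \equiv \bar n_2 \pmod{\bar N \cap \Gamma}$. Nonsingularity of the differential at each point is immediate from the direct-sum decomposition $\mathfrak{g} = \bar n \oplus \mathfrak{a}_1 \oplus \mathfrak{m}_1 \oplus n$.

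For the measure formula, I compute the Jacobian in the left-invariant trivialization of $TG$. At $g = \bar n a(t) m n$, the differential sends $(X_{\bar n}, \dot t \, H, X_m, X_n)$ to $\mathrm{Ad}((a(t) m n)^{-1}) X_{\bar n} + \mathrm{Ad}(n^{-1})(\dot t \, H + X_m) + X_n \in \mathfrak{g}$. With respect to the $\mathrm{Ad}(A_1)$-weight grading, $\bar n$, $\mathfrak{a}_1 \oplus \mathfrak{m}_1$, $n$ are the top, middle, and bottom weight spaces, and $\mathrm{Ad}(n^{-1})$ acts as the identity modulo strictly weight-lowering terms; this makes the Jacobian block-triangular in the decomposition $\bar n \oplus (\mathfrak{a}_1 \oplus \mathfrak{m}_1) \oplus n$, with diagonal blocks $\mathrm{Ad}(\ell^{-1})|_{\bar n}$, the identity on $\mathfrak{a}_1 \oplus \mathfrak{m}_1$, and the identity on $n$. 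A direct calculation gives $\mathrm{Ad}(a(-t)) E_{1j} = \e^{3t} E_{1j}$ for $j = 2, 3$ and $\det \mathrm{Ad}(m^{-1})|_{\bar n} = \det m = 1$, so the Jacobian equals $\e^{6t}$, as claimed. The main technical point is verifying this triangular structure, namely, that contributions from $\mathrm{Ad}(n^{-1})$ always drop into strictly lower-weight spaces and hence do not feed back to alter the diagonal blocks; this is clean because $N$ is abelian and the weight grading has only three levels, so iterated brackets with $\log n$ terminate before contaminating higher weights.
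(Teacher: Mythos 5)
Your argument is correct, and for the embedding claim it is the same argument the paper intends, just written out: the paper simply says the first part ``follows from (\ref{eq:epsilonchoice})'' together with the uniqueness of the Gaussian-elimination decomposition $g = \bar n a m n$, and your chain $\gamma = p_2 n(\bm x_2)n(\bm x_1)^{-1}p_1^{-1} \in \Gamma \cap P$, then $p_2^{-1}\gamma p_1 \in P \cap N = \{I\}$, then the Levi analysis, is exactly the content being invoked. For the measure formula the routes differ in presentation only: the paper cites Proposition 8.45 of Knapp for $\dd\mu(\bar n a m n) = \ee^{2\rho_A \log a}\dd \bar n \dd a \dd m \dd n$, whereas you re-derive that instance of it by exhibiting the block-triangular structure of the differential in the left-invariant trivialization. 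Your Jacobian computation checks out ($\mathrm{Ad}(a(-t))$ acts on $\mathfrak{\bar n}$ by $\ee^{3t}$ on each of the two root spaces, the Levi contributes determinant $1$, and $\mathrm{Ad}(n^{-1})$ only feeds into strictly lower graded pieces since $N$ is abelian and the grading has three levels), so you get $\ee^{6t}$ either way; the self-contained derivation buys independence from the reference at the cost of a page of linear algebra. Two small points worth fixing: the three-level grading you use is the one defined by the generator $\mathrm{diag}(-2,1,1)$ of $\{a(t)\}$ (the center of the Levi of $P$), not by $a_1(t_1)$, under which $\mathfrak{\bar n}$ and $\mathfrak n$ each split into two distinct weights; and in the Levi step note that $\gamma \in \Gamma \cap P$ automatically has $(1,1)$-entry $+1$ once its lower-right block is forced into $\SL(2,\ZZ)$ by the determinant condition, which is what makes the comparison of $(1,1)$-entries give $t_1 = t_2$ cleanly.
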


\begin{proof}
  We first observe that $(\bar n, t, m, \bm x) \mapsto \bar n a(t) m n(\bm x)$ gives smooth coordinates on the set of $g \in G$ with $g^{-1}$ having nonzero $(1,1)$-entry.
  The first part of the lemma then follows from (\ref{eq:epsilonchoice}).
  
  From proposition 8.45 of \cite{MR1920389}, we have that the Haar measure on $g = \bar n a m n$ is given by
  \begin{equation}
    \label{eq:Haardecomp1}
    \dd \mu(g) = \ee^{2 \rho_A \log a} \dd \bar n \dd a \dd m \dd n,
  \end{equation}
  where $2 \rho_A$ is the sum of positive roots for the choice of $A$.
  In this case this is $\bm e_1 - \bm e_2 + \bm e_1 - \bm e_3$, so $2 \rho_A \log a(t) = 6t$.
  Equation (\ref{eq:Haardecomp}) follows. 
\end{proof}

From lemma \ref{lemma:selfintersections}, for any $\delta > 0$, we may define continuous functions $\chi^\pm : \Gamma \backslash G$ so that
\begin{equation}
  \label{eq:chidef}
  0\leq \chi^-(g) \leq \mathbbm{1}_{S_\epsilon}(g) \leq \chi^+(g),\  \mathrm{for\ all\ } g \in \Gamma \backslash G,
\end{equation}
and for each $g \in S$,
\begin{equation}
  \label{eq:chidef1}
  \int_{|\bm x | \leq \epsilon} \chi^+(g\overline{n}(\bm x)) \dd \bm x - \delta \leq \mathrm{vol}( \{ \bm x \in \RR^2: |x| \leq \epsilon\})  \leq \int_{| \bm x| \leq \epsilon} \chi^-(g\overline{n}(\bm x)) \dd \bm x + \delta.
\end{equation}

Applying corollary \ref{corollary:expandingflats} to $\chi^\pm$ leads to the following proposition.
\begin{proposition}
  \label{prop:truncatedequidistribution}
  For any bounded, continuous function $f$ on $(g_l^{-1} \Gamma g_l \cap A) \backslash A \times \Gamma \backslash G $, we have
  \begin{multline}
    \label{eq:truncatedequidistribution}
    \lim_{T \to \infty}  \sum_{\substack{ \xi \in I_l^{-1} \cap \mcD \\ \ee^{6(T - T_0)} < N(\xi) \leq \ee^{6T}  }} \ee^{-6T} f( \tilde a(\xi), n(\frac{\mu_1 + a_1}{m_1}, \frac{\lambda}{m_1m_2}) a(-T) n_1( -\frac{\mu_2}{m_2}) a_1(\tfrac{1}{2} \log m_2)) \\
    \times \mathbbm{1}_{\mcF_1(Y)}(n_1( -\frac{\mu_2}{m_2}) a_1(\tfrac{1}{2} \log m_2)) 
    = \int_{(g_l^{-1}\Gamma g_l \cap A) \backslash A}\int_{S}  f(a, g) \dd \nu(g) \dd a,
  \end{multline}
  where $\nu$ is the restriction of $\mu$ to $P$ and $\mu_j, m_j, \lambda$ are functions of $\xi$ as in theorem \ref{theorem:rootsintersections}. 
\end{proposition}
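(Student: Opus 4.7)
The plan is to apply corollary \ref{corollary:expandingflats} to the test functions $F^\pm(\bm s, g) = f(\hat a(\bm s), g)\chi^\pm(g)$ and then to send $\epsilon, \delta \to 0$. Both sides of (\ref{eq:expandingflats}) will be rewritten, respectively, as the sum on the left and the integral on the right of (\ref{eq:truncatedequidistribution}), up to matching constants and vanishing errors.

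First I would evaluate the right-hand side of corollary \ref{corollary:expandingflats} applied to $F^\pm$. Using lemma \ref{lemma:selfintersections} to decompose $\dd\mu(pn(\bm x)) = \dd\nu(p)\,\dd\bm x$ on $S_\epsilon$, the normalization (\ref{eq:chidef1}) of $\chi^\pm$, and the uniform continuity of $f$, one obtains
\begin{equation*}
\int_{\TT^2}\int_{\Gamma\backslash G} F^\pm\,\dd\mu\,\dd\bm s = \pi\epsilon^2 \int_{\TT^2}\int_S f(\hat a(\bm s), p)\,\dd\nu(p)\,\dd\bm s + O(\delta + \epsilon).
\end{equation*}
Changing variables from $\bm s \in \TT^2$ to $a \in (g_l^{-1}\Gamma g_l \cap A)\backslash A$ via $\hat a$ introduces the inverse regulator $R^{-1}$ of $\mathcal{U}^+$ as a Jacobian factor.

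Next I would analyze the left-hand side. Setting $g_{\bm s} := g_l\hat a(\bm s) g_0^{-1}a(-T)$, the integrand $F^\pm(\bm s, g_{\bm s})$ vanishes outside the set where $g_{\bm s}\in S_\epsilon$; by theorem \ref{theorem:rootsintersections} (adapted to the annulus $T-T_0 < t \leq T$), these $\bm s$ cluster in disjoint small neighborhoods of points $\bm s_\xi$, one for each $\xi$ in the sum on the left of (\ref{eq:truncatedequidistribution}). Near $\bm s_\xi$, the Gauss decomposition $g_{\bm s} = n(\bm x(\bm s))p(\bm s)$ satisfies $\bm x(\bm s_\xi) = 0$. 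Differentiating while using $\hat a(\bm s)^{-1}\partial_{s_i}\hat a(\bm s) = \log\varepsilon_i$ together with the fact that $\mathrm{Ad}(a(T))$ acts on $\mathfrak{n}$ as scaling by $e^{3T}$, the Jacobian of $\bm s \mapsto \bm x$ at $\bm s_\xi$ equals $e^{6T}|\det M|$, where $M$ is the (constant in $\xi$) $2\times 2$ matrix whose columns are the $\mathfrak{n}$-components of $\mathrm{Ad}(g_0)\log\varepsilon_i$. Changing variables from $\bm s$ to $\bm x$ locally and summing then writes the left-hand side as $\pi\epsilon^2\, e^{-6T}\,|\det M|^{-1}\sum_\xi f(\tilde a(\xi), \cdots) + o_{T\to\infty}(1)$.

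Equating the two sides in the limit $T\to\infty$ and sending $\delta, \epsilon \to 0$ yields (\ref{eq:truncatedequidistribution}), conditional on the identity $|\det M| = R$. This identity is an explicit computation using the Vandermonde form of $g_0$ and the traceless diagonality of $\log\varepsilon_i$; geometrically, it reflects the compatibility of Haar measure on $A$ with its pushforward under $\mathrm{Ad}(g_0)$ projected onto $\mathfrak{n}$. The main obstacle is the careful bookkeeping of Jacobian factors and verifying this cancellation; a subsidiary technical point is confirming that the neighborhoods of distinct $\bm s_\xi$ remain disjoint, which follows from lemma \ref{lemma:mcDgeometry} combined with the discreteness of the intersections.
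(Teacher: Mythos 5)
Your proposal follows essentially the same route as the paper: apply Corollary \ref{corollary:expandingflats} to test functions built from $f$ and the sandwiching functions $\chi^\pm$, use Lemma \ref{lemma:selfintersections} to decompose the measure on $S_\epsilon$, and convert the sum over intersection points into an integral over the flat via the transverse disc of area proportional to $\epsilon^2\ee^{-6T}$. The only real difference is that you track the Jacobian constant explicitly and reduce it to the identity $|\det M| = R$, whereas the paper sidesteps this by determining $\nu$ only up to an unspecified positive constant $c$ in (\ref{eq:nudef}); your version is the more careful one on this point, and the remaining computation you flag is routine.
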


In (\ref{eq:truncatedequidistribution}) we have set $
  n_1(x) =
  \begin{pmatrix}
    1 & 0 & 0 \\
    0 & 1 & x \\
    0 & 0 & 1
  \end{pmatrix}
  $.

\begin{proof}
  By breaking into positive and negative parts, we may assume that $f$ is nonnegative.
  Moreover, we may choose $\epsilon$ small enough so that for any $g \in S$ and $a \in A$,
  \begin{equation}
    \label{eq:continuousf}
    |f(a, gn (\bm x)) - f(a, g) | \leq \delta
  \end{equation}
  for any $|\bm x | \leq \epsilon$.
  
  We now note that by theorem \ref{theorem:rootsintersections}, the second argument of $f$ on the left side of (\ref{eq:truncatedequidistribution}) can be written as $g_l a(\xi) g_0^{-1} a(-T)$. 
  From the continuity of $f$, we have that the sum in (\ref{eq:truncatedequidistribution}) is well approximated by
  \begin{equation}
    \label{eq:integralapprox}
    \frac{1}{\mathrm{vol}( \{ \bm x \in \RR^2: |x| \leq \epsilon\})} \int_{(g_l^{-1} \Gamma g_l \cap  A)  \backslash A} f(a, g_l a g_0^{-1} ) \mathbbm{1}_{S_\epsilon}(g_l a g_0^{-1}a(-T)) \dd a,
  \end{equation}
  using the fact that the small region of $a(-T) n(\bm x) a(T) = \overline{n}( \ee^{-3T} \bm x)$ with $|\bm x| \leq \epsilon$ has area $ \ee^{-6T} \mathrm{vol}( \{ \bm x \in \RR^2: |x| \leq \epsilon\})$.

  The integral (\ref{eq:integralapprox}) in turn can be bounded above and below by
  \begin{equation}
    \label{eq:chiintegral}
    \frac{1}{\mathrm{vol}( \{ \bm x \in \RR^2: |x| \leq \epsilon\})} \int_{(g_l^{-1} \Gamma g_0 \cap  A)  \backslash A} f(a, g_l a g_o^{-1}a(-T)) \chi^\pm ( g_l a g_0^{-1} a(-T) ) \dd a.
  \end{equation}
  Corollary \ref{corollary:expandingflats} implies that the limit as $T \to\infty$ of (\ref{eq:chiintegral}) equals
  \begin{equation}
    \label{eq:limit}
    \frac{1}{\mathrm{vol}( \{ \bm x \in \RR^2: |x| \leq \epsilon\})} \int_{(g_l^{-1} \Gamma g_0 \cap  A)  \backslash A} \int_{\Gamma \backslash G} f(a, g) \chi^\pm(g) \dd\mu(g) \dd a
  \end{equation}
  Using (\ref{eq:continuousf}), (\ref{eq:chidef1}) and the expression (\ref{eq:Haardecomp}) for $\dd \mu$, in the limit as $\epsilon, \delta \to 0$, (\ref{eq:limit}) becomes 
  \begin{equation}
    \frac{1}{c} \int_{(g_l^{-1} \Gamma g_0 \cap  A)  \backslash A} \int_N \int_{-T_0}^0 \int_{\mcF_1(Y)}  f(a, \bar n a(t)m) \ee^{6t} \dd \bar n \dd t \dd m,
  \end{equation}
  where $c > 0$ is the normalization for the total measure of $\tfrac{1}{c}\dd \bar n \dd t \dd n \dd m$ over $\Gamma \backslash G$ to be $1$. 
  This is the right side of (\ref{eq:truncatedequidistribution}) with the measure $\nu$ on $S$ given by
  \begin{equation}
    \label{eq:nudef}
    \dd \nu( \bar n a(t) m) = \tfrac{1}{c}\ee^{6t} \dd \bar n \dd t \dd m.
  \end{equation}
\end{proof}

\subsection{Non-divergence}
\label{sec:nondivergence}

To extend proposition \ref{prop:truncatedequidistribution} to theorem \ref{theorem:equidistribution}, it remains to remove the restriction $N(\xi) \geq \ee^{T-T_0}$ and $n_1(-\frac{mu_2}{m_2})a_1(\tfrac{1}{2} \log m_2) \in \mcF_1 (Y)$ from the left hand side of (\ref{eq:truncatedequidistribution}) and remove the corresponding restrictions on $S$ on the right hand side of (\ref{eq:truncatedequidistribution}).
These latter restrictions are easily removed using the expression for the measure $\nu$: the measure of $m \in \SL(2, \ZZ) \backslash \SL(2, \RR)$ with height greater than $Y$ has measure $O(Y^{-1})$ and the measure of $t \leq -T_0$ is $O(\ee^{-6T_0})$.

We remove the restriction that $n_1(-\frac{mu_2}{m_2})a_1(\tfrac{1}{2} \log m_2)\in \mcF (Y)$ using the following lemmas.
Here we let $\mcF(Y)$ denote the set of $\Gamma g \in \Gamma \backslash G$ such that
\begin{equation}
  \label{eq:minbottomrow}
  \min_{\gamma \in \Gamma} |
  \begin{pmatrix}
    0 & 0 & 1
  \end{pmatrix}
  g | \geq Y^{-\frac{1}{2}}.
\end{equation}
\begin{lemma}
  \label{lemma:smalltobigcusp}
  Every $n_1(-\frac{\mu_2}{m_2})a_1(\tfrac{1}{2} \log m_2) \not\in \mcF_1 (Y)$ with $\ee^{6(T - T_0)} < m_1^2m_2 \leq \ee^{6T}$, corresponds to a totally positive $\xi \in I_l^{-1}$ and satisfying $g_l a(\xi) g_0^{-1} a(-T ) \not\in \mcF(Y \ee^{-4T_0})$
\end{lemma}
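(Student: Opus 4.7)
The plan is to transfer the cusp excursion of the $P_0$-coordinate to a short primitive lattice vector witnessing an excursion for the full $\Gamma \backslash G$ point, using the explicit coordinates supplied by theorem \ref{theorem:rootsintersections}.

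First, theorem \ref{theorem:rootsintersections} provides some $\gamma \in \Gamma$ for which
\begin{equation*}
  g_l \tilde a(\xi) g_0^{-1} a(-T) = \gamma \, p \, a(t - T),
\end{equation*}
where $p$ is the $P_0$-matrix in (\ref{eq:intersectioncoordinates1}) and $t$ is determined by $\ee^{6t} = N(\xi) = m_1^2 m_2$. The hypothesis $\ee^{6(T - T_0)} < m_1^2 m_2 \leq \ee^{6T}$ places $t$ in $(T - T_0, T]$, so $\ee^{t - T} \leq 1$.

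Next, the assumption $n_1(-\mu_2/m_2)\, a_1(\tfrac{1}{2} \log m_2) \notin \mcF_1(Y)$ supplies some $\gamma_0 \in \SL(2, \ZZ)$ whose bottom row $(c, d) \in \ZZ^2$ is primitive and satisfies $|(c, d) M| < Y^{-1/2}$, where $M$ denotes the $\SL(2, \RR)$-block of $n_1(-\mu_2/m_2)\, a_1(\tfrac{1}{2} \log m_2)$. Embed $\gamma_0$ into $\SL(3, \ZZ)$ as the block matrix $\tilde \gamma_0 = \bigl( \begin{smallmatrix} 1 & 0 \\ 0 & \gamma_0 \end{smallmatrix} \bigr)$, so that $(0, 0, 1) \tilde \gamma_0 = (0, c, d)$.

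The key observation is that since the first row and column of $p$ decouple from any vector with $0$ in its first slot, the product $(0, c, d)\, p$ has first entry zero and its remaining two entries coincide with $(c, d) M$. Applying $a(t - T)$ scales these entries uniformly by $\ee^{t - T}$, so
\begin{equation*}
  |(0, c, d)\, p \, a(t - T)| = \ee^{t - T} |(c, d) M| < \ee^{t - T} Y^{-1/2} \leq Y^{-1/2}.
\end{equation*}
Taking $\gamma' = \tilde \gamma_0 \gamma^{-1} \in \Gamma$, the vector $v = (0, 0, 1) \gamma'$ is primitive (being the bottom row of an $\SL(3, \ZZ)$-matrix), and the above inequality rewrites as $|v \cdot g_l \tilde a(\xi) g_0^{-1} a(-T)| < Y^{-1/2} \leq (Y \ee^{-4 T_0})^{-1/2}$, witnessing $g_l \tilde a(\xi) g_0^{-1} a(-T) \notin \mcF(Y \ee^{-4 T_0})$. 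There is no substantive obstacle here: the argument is a direct unwinding of theorem \ref{theorem:rootsintersections} together with the block structure of $p$, and the bound obtained is in fact slightly stronger than claimed --- even $\mcF(Y)$ would follow --- so the $\ee^{-4T_0}$ slack presumably accommodates losses in the surrounding non-divergence estimate.
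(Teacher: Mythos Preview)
Your proof is correct and follows exactly the same approach as the paper's: embed the $\SL(2,\ZZ)$ witness $(c,d)$ as the primitive vector $(0,c,d)\in\ZZ^3$, multiply through the block-triangular coordinate matrix furnished by theorem~\ref{theorem:rootsintersections}, and observe that only the $2\times2$ block $M$ survives, scaled by $\ee^{t-T}\le 1$. Your remark that the resulting bound actually gives $\notin\mcF(Y)$ rather than merely $\notin\mcF(Y\ee^{-4T_0})$ is also correct; the paper's cruder estimate $Y^{-1/2}\ee^{2T_0}$ on the norm is unnecessary slack.
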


\begin{proof}
  This follows immediately from the observation that if $
  \begin{pmatrix}
    a & b \\
    c & d
  \end{pmatrix}
  $
  are so that the bottom row of
  \begin{equation}
    \label{eq:cusp}
    \begin{pmatrix}
      a & b \\
      c & d
    \end{pmatrix}
    \begin{pmatrix}
      1 & -\frac{\mu_2}{m_2} \\
      0 & 1
    \end{pmatrix}
    \begin{pmatrix}
      m_2^{-\frac{1}{2}} & 0 \\
      0 & m_2^{\frac{1}{2}}
    \end{pmatrix}
  \end{equation}
  has norm at most $Y^{-\frac{1}{2}}$, then the bottom row of
  \begin{equation}
    \label{eq:cusp1}
    \begin{pmatrix}
      1 & 0 & 0 \\
      0 & a & b \\
      0 & c & d
    \end{pmatrix}
    \begin{pmatrix}
      1 & * & * \\
      0 & 1 & - \frac{\mu_2}{m_2} \\
      0 & 0 & 1
    \end{pmatrix}
    \begin{pmatrix}
      m_1^{-\frac{2}{3}}m_2^{-\frac{1}{3}}\ee^{2T} & 0 & 0 \\
      0 & m_1^{\frac{1}{3}}m_2^{-\frac{1}{3}}\ee^{-T} & 0 \\
      0 & 0 & m_1^{\frac{1}{3}} m_2^{\frac{2}{3}} \ee^{-T}
    \end{pmatrix}
  \end{equation}
  has norm at most $Y^{-\frac{1}{2}} \ee^{2T_0}$.
  Theorem \ref{theorem:rootsintersections} then implies the lemma.
\end{proof}

\begin{lemma}
  \label{lemma:setincusp}
  The number of $\xi \in I_l^{-1}$ with $\e^{6(T-T_0)}< N(\xi) \leq \ee^{6T}$ such that $g_l a(\xi) g_0^{-1} a(-T) \not\in \mcF(Y)$ is $O((Y^{-\frac{3}{7}}e^{2T_0} + Y^{-\frac{1}{7}}) \ee^{6T})$. 
\end{lemma}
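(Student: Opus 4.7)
The plan is to rewrite the cusp condition $g_l \tilde a(\xi) g_0^{-1} a(-T) \notin \mcF(Y)$ as the existence of an auxiliary element $\beta \in I_l$ with $\beta\xi$ small in $\ZZ[\alpha]$, and then count such $\xi$'s. A short vector of the unit-covolume lattice $\ZZ^3 g_l \tilde a(\xi) g_0^{-1} a(-T)$ of norm below $Y^{-\frac{1}{2}}$ corresponds to a primitive integer vector $(a_1, a_2, a_3)$; its image $(a_1, a_2, a_3) g_l$ is, up to the scalar $N(I_l)^{-\frac{1}{3}}$, the embedding of some $\beta \in I_l \setminus \{0\}$, multiplying by $\tilde a(\xi)$ gives the (scaled) embedding of $\beta\xi$, and multiplying by $g_0^{-1}$ converts this back into coordinates $(c_1, c_2, c_3) \in \ZZ^3$ in the $\ZZ$-basis $\{\alpha^2, \alpha, 1\}$ of $\ZZ[\alpha]$. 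A computation in the same spirit as the one at the end of the proof of Lemma \ref{lemma:smalltobigcusp} shows the cusp condition is equivalent to the existence of $\beta \in I_l \setminus \{0\}$ with $\beta\xi = c_1\alpha^2 + c_2 \alpha + c_3$ satisfying
\begin{equation*}
  c_1^2 \e^{4T} + (c_2^2 + c_3^2) \e^{-2T} < Y^{-1} (N(I_l) N(\xi))^{\frac{2}{3}}.
\end{equation*}

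Next I would split the bad $\xi$'s into two types: those admitting a witness with $c_1 = 0$ (type I) and those where every witness has $c_1 \neq 0$ (type II). In type I the element $\beta\xi$ lies in the rank-$2$ sublattice $\ZZ + \ZZ\alpha$ of $\ZZ[\alpha]$ subject to $|c_2|, |c_3| \ll Y^{-\frac{1}{2}} (N(I_l)N(\xi))^{\frac{1}{3}} \e^{T}$; in type II the integrality $|c_1| \geq 1$ combined with the bound on $|c_1|$ restricts $N(\xi)$ from below, so only a subrange of $(\e^{6(T-T_0)}, \e^{6T}]$ contributes, and this is where the $\e^{2T_0}$ factor enters. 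For the count itself, I would observe that for each pair $(\xi,\beta)$ the product $\eta := \beta\xi$ is an element of $\ZZ[\alpha]$ in an explicit box, and conversely the factorization $\eta = \beta\xi$ with $\beta \in I_l$, $\xi \in I_l^{-1}$ corresponds to an integral ideal decomposition $(\eta) = K_\beta \cdot K_\xi$ in $\mathcal{O}_K = \ZZ[\alpha]$ with $K_\xi = \xi I_l$; hence for each $\eta$ the number of admissible $\xi$ is at most the ideal divisor function $d((\eta))$. Combining a box-volume count with the divisor bound and dyadically summing over $N(\xi) \in (\e^{6(T-T_0)}, \e^{6T}]$ yields a bound of the stated shape, with the $Y^{-\frac{3}{7}}\e^{2T_0}$ term arising from type II and the $Y^{-\frac{1}{7}}$ term from type I.

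The main obstacle is the optimization that produces the precise exponents $\frac{3}{7}$ and $\frac{1}{7}$, which must be extracted carefully from the interplay among the box-volume count, the loss from the divisor function, the dyadic sum, and the lower bound on $N(\xi)$ in type II. Fortunately, only a negative power of $Y$ with at most polynomial $\e^{T_0}$ loss is needed in the subsequent application to Theorem \ref{theorem:equidistribution}, so the exact values of the exponents are inessential.
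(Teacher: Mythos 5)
Your reduction of the cusp condition is correct and is essentially the paper's: the short primitive vector $(a_1,a_2,a_3)$ is exactly the witness the paper uses (its $(c,d_1,d_2)$), your coordinate $c_1$ of $\beta\xi$ is, up to the scalar $\kappa_l N(\xi)^{-1/3}$, the integer $a_1 q + a_2 r_1 + a_3 r_2$ attached to the Farey point $(q,r_1,r_2)$ of (\ref{eq:xibijection}), and your type I/II dichotomy is the paper's observation that this integer is either $0$ or at least $1$ in absolute value (so that type II is empty once $Y$ exceeds a constant --- the $\e^{2T_0}$ in the paper's bound actually comes from Lemma \ref{lemma:smalltobigcusp} and from the $\epsilon^{-1}$ distortion of the LU decomposition, not from type II).

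The genuine gap is in the counting step for type I. Bounding the number of bad $\xi$ by $\sum_{\eta} d((\eta))$ over the box cannot give a bound of the form $O(c(Y,T_0)\,\e^{6T})$ uniformly in $T$, which is what the lemma asserts and what the application to Theorem \ref{theorem:equidistribution} requires (the bad set must be a vanishing \emph{proportion} of the $\asymp \e^{6T}$ points as $Y \to \infty$, for all large $T$). Even on average the ideal divisor function costs a factor $\gg (\log N(\eta))^{2} \asymp T^{2}$; pointwise it costs $N(\eta)^{\varepsilon} = \e^{O(\varepsilon T)}$; and since your type-I $\eta$ range over a two-dimensional slice $\{c_1 = 0\}$ rather than a full box, the average of $d((\eta))$ restricted to that slice is not even controlled by a logarithm without further work (divisors $\mathfrak{b}$ meet $\ZZ + \ZZ\alpha$ in index as small as $N(\mathfrak{b})^{2/3}$, so $\sum_{\mathfrak b} \#\{\eta \in \mathrm{box} : \mathfrak b \mid (\eta)\}$ can be a positive power of $\e^{T}$ larger than the box count). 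The underlying reason is that counting pairs $(\xi,\beta)$ via the product $\eta=\beta\xi$ throws away the constraint $N(\xi) \in (\e^{6(T-T_0)}, \e^{6T}]$, which is what keeps the true multiplicity bounded. The paper counts in the other order: it fixes the witness $(c,d_1,d_2)$ (equivalently $\beta$, up to scaling), observes that the corresponding $\xi$ are Farey points lying on the line $c + d_1x_1 + d_2x_2 = 0$ in $\TT^2$, and uses the spacing estimate (\ref{eq:spacingresult}) --- consecutive Farey points of denominator $\leq \e^{2T}$ on such a line are separated by $\gg \e^{-4T}\sqrt{d_1^2+d_2^2}$ --- to count them with no logarithmic loss; summing over the $O(Y_1^{-1}\e^{2T})$ admissible lines gives $O(Y_1^{-1}\e^{6T})$. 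To repair your argument you would need to replace the divisor bound by such a spacing or geometry-of-numbers argument for the $\xi$ incident to a fixed $\beta$, at which point you have reconstructed the paper's proof. (The exponents $\tfrac{3}{7}$, $\tfrac{1}{7}$ come from optimizing over the measure-$\epsilon$ set where the LU decomposition degenerates, Lemma \ref{lemma:badLU}; your coordinate-free setup avoids that issue, so different exponents would indeed be acceptable.)
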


\begin{proof}
  From \cite[Lemma~13]{MR4467125}, the equation
  \begin{equation}
    \label{eq:xibijection}
    g_l \tilde a(\xi) g_0^{-1} = \frac{\kappa_l}{N(\xi)^{\frac{1}{3}}} 
    \begin{pmatrix}
      q & * & * \\
      r_1 & * & * \\
      r_2 & * & * 
    \end{pmatrix}
  \end{equation}
  defines a bijection between $\xi \in I_l^{-1}$ and $(q, r_1, r_2) \in \ZZ^3$ under which primitive $\xi$ ($\xi I_l$ not divisible by rational integer) are mapped to primitive $(q, r_1, r_2)$.
  Here $\kappa_l$ is a constant related to the discriminant of the ideal $I_l$.

  From lemma \ref{lemma:badLU} and an elementary lattice point count (note that $q$ is a linear function of $\xi$), it follows that the number of $\xi$ with $N(\xi) \leq \e^{6T}$ and $q \leq \epsilon \e^{2T}$ is $O(\epsilon e^{6T}$.
  
  Performing an LU decomposition, we have
  \begin{equation}
    g_l a(\xi) g_0^{-1}a(-T) =  n(\tfrac{r_1}{q}, \tfrac{r_2}{q}) a(\tfrac{1}{2}\log (\tfrac{q}{N(\xi)^{\frac{1}{3}}}) -T) g
  \end{equation}
  with $|g| \ll \epsilon^{-1}$.
  Now if the there is $\gamma \in \Gamma$ so that $|
  \begin{pmatrix}
    1 & 0 & 0 
  \end{pmatrix}
  \gamma g_l \tilde a(\xi) g_0^{-1}| \leq Y^{-\frac{1}{2}}$, then
  \begin{equation}
    \label{eq:fareyheight}|
    \begin{pmatrix}
      1 & 0 & 0 
    \end{pmatrix}
    \gamma n(\tfrac{r_1}{q}, \tfrac{r_2}{q}) a(-T)| \ll \epsilon^{-2}(\epsilon^{-1} \e^{T_0}) Y^{-\frac{1}{2}}.
  \end{equation}

  It now suffices to upper bound the number of Farey points of denominator $q \leq \ee^{2T}$ such that there are coprime integers $c,d_1,d_2$ satisfying
  \begin{equation}
    \label{eq:cusp1condition}
    (c + d_1 \frac{r_1}{q} + d_2 \frac{r_2}{q} )^2 \ee^{4T} + d_1^2 \ee^{-2T} + d_2^2 \ee^{-2T}  \ll \epsilon^{-4}(\epsilon^{-2} + e^{2T_0}) \tfrac{1}{Y} =: \tfrac{1}{Y_1}. 
  \end{equation}
  This number is therefore bounded by the number of Farey points satisfying
  \begin{equation}
    \label{eq:cusp1condition1}
    |c + d_1 \frac{r_1}{q} + d_2 \frac{r_2}{q}| \ll Y_1^{-\frac{1}{2}} \ee^{-2T}
  \end{equation}
  for some coprime integers $c, d_1, d_2$ satisfying $d_1, d_2 \leq Y_1^{-\frac{1}{2}} \ee^{T}$.

  The quantity on the left of (\ref{eq:cusp1condition1}) is either $0$ or at least $\frac{1}{q}$, so if (\ref{eq:cusp1condition1}) is satisfied with $q \leq \ee^{2T}$ and $Y_1$ sufficiently small, then the Farey point $(\frac{r_1}{q}, \frac{r_2}{q})$ must lie on the line $c + d_1 x_1 + d_2 x_2  = 0$.
  Any other Farey point $(\frac{r_1'}{q'}, \frac{r_2'}{q'})$ on this line satisfies
  \begin{equation}
    \label{eq:spacingresult}
    \begin{pmatrix}
      r_1 r_2' - r_1' r_2 & r_2 q' - r_2' q & r_1'q - r_1 q'
    \end{pmatrix}
    = k
    \begin{pmatrix}
      c & d_1 & d_2
    \end{pmatrix}
  \end{equation}
  for a nonzero integer $k$.
  The distance between these Farey points is therefore at least $\frac{\sqrt{d_1^2+ d_2^2}}{q q'} \geq \ee^{-4T} \sqrt{d_1^2 + d_2^2}$.

  The total length of a given line $c + d_1 x_1 + d_2 x_2 = 0$ on the torus $\TT^2$ is $\frac{ \sqrt{d_1^2 + d_2^2}}{\gcd (d_1, d_2)}$, and the number of such lines for given $d_1$ and $d_2$ is $\gcd(d_1, d_2)$.
  Combined with the lower bound on the spacing between Farey points, we have that the total number of Farey points with denominators at most $\ee^{2T}$ on lines $c + d_1 x_1 + d_2 x_2 = 0$ with $d_1^2 + d_2^2 \ll Y_1^{-1}\ee^{2T}$ is $O(Y_1^{-1} \ee^{6T})$.
  The lemma now follows by choosing $\epsilon = Y^{-\frac{1}{7}}$. 
\end{proof}

Combining lemmas \ref{lemma:smalltobigcusp} and \ref{lemma:setincusp} shows that removing the $\mcF_1(Y)$ condition in \ref{prop:truncatedequidistribution} creates an arbitrary small remainder if $Y$ is small enough (depending on $T_0$).
The following lemma is used to remove the condition $N(\xi) \geq \e^{6(T-T_0)}$. 
Its proof, which we omit, is an elementary exercise in lattice point counting.
\begin{lemma}
  \label{lemma:idealcount}
  The number of $\xi \in \mcD \cap I_l^{-1}$ with $N(\xi) \leq \ee^{6(T - T_0)}$ is $O(\e^{6(T-T_0)})$.
\end{lemma}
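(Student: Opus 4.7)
The plan is a direct lattice-point count. Setting $R = \ee^{6(T-T_0)}$, I will show that $\mcD \cap \{\xi \in \RR^3_{>0} : N(\xi) \leq R\}$ is a bounded region whose volume is $O(R)$, and then the $O(R)$ bound on the number of $\xi \in I_l^{-1}$ in it will follow from the standard Lipschitz principle for lattice-point counting.

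The key structural observation is that $\mcD$ is a ``cone'' over $\tilde\mcD$. Since $\ell(\varepsilon_1)$ and $\ell(\varepsilon_2)$ lie in the trace-zero hyperplane (\ref{eq:unitplane}), positive scalar multiplication $\xi \mapsto c\xi$ shifts $\ell(\xi)$ by $(\log c)(1,1,1)$ and leaves the coordinates $s_1, s_2$ unchanged. Hence $\mcD = \{c\eta : c > 0,\ \eta \in \tilde\mcD\}$, and since $N(c\eta) = c^3$, the region $\mcD \cap \{N(\xi) \leq R\}$ is the dilation by $R^{1/3}$ of the bounded set $\mcD \cap \{N(\xi) \leq 1\}$ (bounded by the compactness of $\tilde\mcD$, as used implicitly in lemma \ref{lemma:mcDgeometry}).

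Next, $I_l^{-1}$ sits in $\RR^3$ as a full-rank lattice under the three embeddings, with covolume related to the discriminant of $\QQ(\alpha)$ and the norm of $I_l$. Applying the Lipschitz principle to the dilated region yields
\[
\#\{\xi \in \mcD \cap I_l^{-1} : N(\xi) \leq R\} = \frac{R\cdot \mathrm{vol}(\mcD \cap \{N \leq 1\})}{\mathrm{covol}(I_l^{-1})} + O(R^{2/3}),
\]
which is $O(R) = O(\ee^{6(T-T_0)})$, as claimed.

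The only step needing a short justification is regularity of the boundary of $\mcD \cap \{N \leq 1\}$ sufficient to support the $O(R^{2/3})$ error term. This boundary consists of the smooth ``cap'' $\tilde\mcD \subset \{N=1\}$ together with pieces of the four walls cut out by $s_j \in \{0,1\}$, each piecewise smooth with finite area, so Davenport's theorem applies uniformly in $R$. There is no serious obstacle; the result is an elementary counting exercise once the cone structure of $\mcD$ is recognized.
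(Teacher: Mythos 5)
Your proposal is correct, and it is essentially the paper's intended argument: the paper explicitly omits the proof, describing it only as ``an elementary exercise in lattice point counting,'' and your observation that $\mcD$ is the cone over the compact set $\tilde\mcD$, so that $\mcD \cap \{N \leq R\}$ is the $R^{1/3}$-dilate of a fixed bounded region with Lipschitz boundary, followed by Davenport's lemma applied to the full-rank embedding of $I_l^{-1}$, is exactly that exercise carried out (and in fact yields an asymptotic, stronger than the stated $O(\ee^{6(T-T_0)})$ bound).
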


\bibliographystyle{plain}
\bibliography{references} 

\end{document}